\numberwithin{equation}{section}
\theoremstyle{plain}
\newtheorem{thm}{Theorem}[section]
\newtheorem{example}{Example}[section]
\newtheorem{lemma}{Lemma}[section]
\newtheorem{corollary}{Corollary}[section]
\newtheorem{definition}{Definition}[section]
\newtheorem{remark}{Remark}[section]
\def\ud{\, \mathrm{d}}
\begin{document}

\centerline {\Large{\bf A Representation Theorem for Smooth Brownian
Martingales}}

\centerline{}

\centerline{}

\centerline{\bf {Sixian Jin\footnote{Institute of Mathematical Sciences,
Claremont Graduate University, Sixian.Jin@cgu.edu}, Qidi
Peng\footnote{Institute of Mathematical Sciences, Claremont Graduate
University, Qidi.Peng@cgu.edu}, Henry Schellhorn\footnote{Corresponding
author, Institute of Mathematical Sciences, Claremont Graduate University,
Henry.Schellhorn@cgu.edu} }}

\centerline{}

\noindent\textbf{Abstract:} We show that, under certain smoothness conditions, a
Brownian martingale, when evaluated at a fixed time, can be represented via
an exponential formula at a later time. The time-dependent generator of this
exponential operator only depends on the second order Malliavin derivative
operator evaluated along a "frozen path". The exponential operator can be
expanded explicitly to a series representation, which resembles the Dyson
series of quantum mechanics. Our continuous-time martingale representation
result can be proven independently by two different methods. In the first
method, one constructs a time-evolution equation, by passage to the limit of
a special case of a \textit{backward Taylor expansion} of an approximating
discrete time martingale. The exponential formula is a solution of the
time-evolution equation, but we emphasize in our article that the
time-evolution equation is a separate result of independent interest. In the
second method, we use the property of denseness of exponential functions. We provide several applications of the exponential
formula, and briefly highlight numerical applications of the backward Taylor
expansion.

\noindent\textbf{Keywords:} Continuous martingales, Malliavin calculus.

\noindent\textbf{MSC 2010:} 60G15 ; 60G22 ; 60H07

\section{Introduction}

The problem of representing Brownian martingales has a long and
distinguished history. Dambis \cite{r2} and Dubins-Schwarz \cite{r3} showed
that continuous martingales can be represented in terms of time-changed
Brownian motions. Doob \cite{r4}, Wiener and It\^o developed what is often
called It\^o's martingale representation theorem: every local Brownian
martingale has a version which can be written as an It\^o integral plus a
constant. In this article, we consider a special kind of martingales which
are conditional expectations of an $\mathcal{F}_{T}$-measurable random
variable $F$. Recall that, when the random variable $F$ is Malliavin
differentiable, the Clark-Ocone formula (\cite{r1,r9}) states that the
integrand in It\^o's martingale representation theorem is equal to the
conditional expectation of the Malliavin derivative of $F$. We focus on a
less general case, where the Brownian martingale is assumed to be
"infinitely smooth". Namely, the target random variable $F$ is infinitely
differentiable in the sense of Malliavin. We show that such a Brownian
martingale, when evaluated at time $t\le T$, $E[F|\mathcal{F}_{t}]$, can be
represented as an exponential operator of its value at the later time $T$.

While smoothness is a limitation to our result, our representation
formula opens the way to new numerical schemes, and some analytical
asymptotic calculations, because the exponential operator can be calculated
explicitly in a series expansion, which resembles the Dyson series of
quantum mechanics. Although we still call our martingale's expansion Dyson
series, there are two main differences between our martingale representation
and the Dyson formula for the initial value problem in quantum mechanics.
First, in the case of martingales, time flows backward. Secondly, the
time-evolution operator is equal to one half of the second-order Malliavin
derivative evaluated along a constant path, while for the initial value
problem in quantum mechanics the time-evolution operator is equal to $-2\pi
i $ times the time-dependent Hamiltonian divided by the Planck constant.

Our continuous-time martingale representation result can be proved using two
different methods: by discrete time approximation and by
approximation from a dense subset. In the first method, the key idea is to
construct the \textit{backward Taylor expansion} (BTE) of an approximating
discrete-time martingale. The BTE was introduced in Schellhorn and Morris
\cite{r13}, and applied to price American options numerically. The idea in
that paper was to use the BTE to approximate, over one time-step, the
conditional expectation of the option value at the next time-step. While not
ideal to price American options because of the lack of differentiability of
the payoff, the BTE is better suited to the numerical calculation of the
solution of smooth backward stochastic differential equations (BSDE). In a
related paper, Hu et al. \cite{r5} introduce a numerical scheme to solve a
BSDE with drift using Malliavin calculus. Their scheme can be viewed as a
Taylor expansion carried out until the first order. Our BTE can be seen as a
generalization to higher order of that idea, where the Malliavin derivatives
are calculated at the future time-step rather than at the current time-step.

The time-evolution equation results then by a passage to the limit, when the
time increment goes to zero, of the BTE, following the "frozen path". The
exponential formula is then a solution of the time-evolution equation, under
certain conditions. We stress the fact that both the BTE and the
time-evolution equation are interesting results in their own right. Since
the time-evolution equation is obtained from the BTE only along a particular
path, we conjecture that there might be other types of equations that smooth
Brownian martingales satisfy in continuous time. The time-evolution equation
can also be seen as a more general result than the exponential formula, in
the same way that the semi-group theory of partial differential equations
does not subsume the theory of partial differential equations. For instance,
other types of series expansion, like the Magnus expansion \cite{OteoRos}
can be used to solve a time-evolution equation.

We also sketch an alternate method, which we call the \textit{density method}
of proof of the exponential formula, which uses the denseness of stochastic
exponentials in $L^{2}(\Omega )$. The complete proof \footnote{%
This proof is available from the authors, upon request.} goes along the
lines of the proof of the exponential formula for fractional Brownian motion
(fBm) with Hurst parameter $H>1/2$, which we present in a separate paper
\cite{JPS}. We emphasize that it is most likely nontrivial to obtain the
exponential formula in the Brownian case by a simple passage to the limit of
the exponential formula for fBm when $H$ tends to $1/2$ from above. We
mention three main differences between Brownian motion and fBm in our
context. First, by the Markovian nature of Brownian motion, the backward
Taylor expansion leads easily in the Brownian case to a numerical scheme.
Second, there is a time-evolution equation in the Brownian case, but
probably not in the fBm case, so that the BTE method of proof is
unavailable. Third, the fractional conditional
expectation (which is defined only for $H>1/2$ in \cite{FB}) in general does not coincide with
the conditional expectation.

The structure of this paper is the following. We first expose the discrete
time result, namely the backward Taylor expansion for functionals of
discrete Brownian sample path. We then move to continuous time, and present
the time-evolution equation and exponential formula. We then sketch the
density method of proof. Four explicit examples are given, which show the
usefulness of the Dyson series in analytic calculations. Example 4 is about
the Cox-Ingersoll-Ross model with time-varying parameters, which, as far as
we know, is a new result. All proofs of main results are relegated to the
appendix.

\section{Martingale Representation}

\subsection{\noindent Preliminaries and notation}

This section reviews some basic Malliavin calculus and
introduces some definitions that are used in our article. We denote by $%
(\Omega, \mathcal{\{F}_{t}\}_{t \geq 0},P)$ a complete filtered
probability space, where the filtration $\mathcal{\{F}_{t}\}_{t \geq 0}$ satisfies the
usual conditions, i.e., it is the usual augmentation of the filtration
generated by Brownian motion $W$ on $\mathbb{R}_+$ (most results can be easily
generalized to Brownian motion on $\mathbb{R}_+^{d}$, $d\ge2$). Unless stated otherwise all equations involving
random variables are to be regarded to
hold $P$-almost surely.

Following by \cite{r10}, we say that a real function $g:[0,T]\rightarrow
\mathbb{R}^{n}$ is symmetric if:
\begin{equation*}
g(x_{\sigma (1)},\ldots,x_{\sigma(n)})=g(x_{1},\ldots,x_{n}),
\end{equation*}%
for any permutation $\sigma $ on $(1,2,\ldots,n)$. If in addition, $g\in
L^2([0,T]^n)$, i.e.,
\begin{equation*}
||g||_{L^{2}([0,T]^{n})}^{2}=\int_{0}^{T}\ldots
\int_{0}^{T}g^{2}(x_{1},\ldots,x_{n})\, \mathrm{d} x_{1}\ldots\, \mathrm{d}
x_{n}<\infty,
\end{equation*}%
then we say $g$ belongs to $\hat{L}^{2}([0,T]^{n})$, the space of symmetric
square-integrable functions on $[0,T]^{n}$. Denote by $L^2(\Omega)$ the
space of square-integrable random variables, i.e., the norm of $F\in
L^2(\Omega)$ is
\begin{equation*}
\|F\|_{L^2(\Omega)}=\sqrt{E[F^2]}<\infty.
\end{equation*}
The Wiener chaos expansion of $F\in L^2(\Omega)$, is thus given by
\begin{equation*}
F=\sum_{m=0}^{\infty }I_{m}(f_{m})\text{ \ \ in }L^{2}(\Omega ),
\label{WienerChaos}
\end{equation*}%
where $\{f_{m}\}_{m\ge0}$ is a uniquely determined sequence of
deterministic functions ($f_m:~\mathbb R^m\rightarrow\mathbb R$ is the so-called $m$-dimensional kernel) with $f_0\in\mathbb R$, $ f_m\in\hat{L}^{2}([0,T]^{m})$ for $m\ge1$, and the operator $I_m:~\hat{L}^{2}([0,T]^{m})\rightarrow L^2(\Omega)$ is
defined as
\begin{equation*}
\left\{%
\begin{array}{lll}
I_{0}(f_{0}) & = & f_{0};\\
I_{m}(f_{m}) & = & m!\int_{0}^{T}\int_{0}^{t_{m}}\ldots
\int_{0}^{t_{2}}f(t_{1},\ldots,t_{m})\, \mathrm{d} W(t_{1})\, \mathrm{d}
W(t_{2})\ldots\, \mathrm{d} W(t_{m}),~\text{ for }m\ge1.
\end{array}%
\right.
\end{equation*}
For an $L^2(\Omega)$ element $u$, we denote its Skorohod integral by $\int\limits_{0}^{T}u(s)\delta W(s)$, which is considered as the adjoint of the
Malliavin derivative operator. To be more explicit, it can be defined this way: for all $t\in%
[0,T]$, if the Wiener chaos expansion of $u(t)$ is
\begin{equation*}
u(t)=\sum_{m=0}^{\infty }I_{m}(f_{m}(\cdot,t))~\mbox{in $L^2(\Omega)$},
\end{equation*}
where for each $m\ge0$, $f_m:~\mathbb R^{m+1}\rightarrow\mathbb R$ is a uniquely determined $(m+1)$-dimensional kernel, then the Skorohod integral of $u$ is defined to be
\begin{equation*}
\int\limits_{0}^{T}u(s)\delta W(s)=\sum_{m=0}^{\infty }I_{m+1}(\tilde{f}%
_{m})~\mbox{in $L^2(\Omega)$},
\end{equation*}
where $\tilde{f}_{m}$ denotes the symmetrization of the $(m+1)$-dimensional
kernel $f_{m}$ with respect to its $(m+1)$th argument (see Proposition 1.3.1
in \cite{r8} for more details):
\begin{equation*}
\tilde{f}_{m}(t_1,\ldots,t_m,t)=\frac{1}{m+1}\Big(f_m(t_1,\ldots,t_m,t)+%
\sum_{i=1}^mf_m(t_1,\ldots,t_{i-1},t,t_{i+1},\ldots,t_m,t_i)\Big).
\end{equation*}
Following Lemma 4.16 in \cite{r10}, the Malliavin derivative $D_{t}F$ of $F$
(when it exists) satisfies
\begin{equation*}
D_{t}F=\sum_{m=1}^{\infty }mI_{m-1}(f_{m}(\cdot,t))~\mbox{in}~
L^{2}(\Omega).
\end{equation*}
We denote the Malliavin derivative of order $l$ of $F$ at time $t$ by $%
D_{t}^{l}F$, as a shorthand notation for $\underbrace{D_t\ldots D_t}_{l~%
\mbox{times}}F$. We call $\mathbb{D}_{\infty }([0,T])$ the set of random
variables which are infinitely Malliavin differentiable and $\mathcal{F}_{T}$%
-measurable. A random variable is said to be infinitely Malliavin
differentiable if $F\in L^2(\Omega)$ and for any integer $n\ge1$,
\begin{equation}  \label{dinfty}
\Big\|\sup_{s_{1},\ldots ,s_{n}\in [0,T]}\big|%
D_{s_{n}}\ldots D_{s_{1}}F\big|\Big\|_{L^2(\Omega)} <\infty .
\end{equation}
In particular, we denote by $\mathbb{D}^N([0,T])$ the collection of all $F\in L^2(\Omega)$ satisfying (\ref{dinfty}) for $n\leq N$.

We define the freezing path operator $\omega ^{t}$ on a Brownian motion $W$ by
%\footnote{%
%The introduction of this operator $\omega ^{t}$ on the path may seem
%awkward, but it facilitates the proof of Theorem 2.4.}
\begin{equation}  \label{defineomegat}
\left\{W(s,\omega ^{t})\right\}_{s\ge0}:=\left\{W(s)\chi_{[s\le t]}+W(t)\chi_{[s>t]}\right\}_{s\ge0},
\end{equation}
with $\chi$ being the indicator function. When Brownian motion is defined as the coordinate mapping process (see \cite%
{KS}), then each trajectory of $W(\cdot,\omega ^{t})$ represents obviously a "frozen path" -- a
particular path where the corresponding Brownian motion becomes constant
after time $t$. More generally, let $F\in L^2(\Omega)$ be a random variable generated by $\{W(s)\}_{s\in[0,T]}$, namely, there exists an operator such that $F=G(W\chi_{[0,T]})$ and $\{u(t):=G(W\chi_{[0,t]})\}_{t\ge0}$ is a continuous-time process in $L^2(\Omega)$, the freezing path operator $\omega^t$ on $F$ is then defined by
$$
F(\omega^t)=G(W\chi_{[0,T\wedge t]}),
$$
where $T\wedge t:=\min\{T,t\}$. In the following we denote by
$$
<f,W\chi_{[0,T]}>=\int_0^Tf(s)\ud W(s).
$$
Then for example, let $F=G(W\chi_{[0,T]})=\big(<1,W\chi_{[0,T]}>\big)^2=W(T)^2$, then
$$
F(\omega^t)=G(W\chi_{[0,T\wedge t]})=\big(<1,W\chi_{[0,T\wedge t]}>\big)^2=W(T\wedge t)^2.
$$
Remark that  if $\lim_{M\rightarrow\infty}F_M= F=G(W\chi_{[0,T]})$ in $L^2(\Omega)$ and $F_M=G_M(W\chi_{[0,T]})$,  then by definition of freezing path operator,
$$G_M(W\chi_{[0,t]})\xrightarrow[M\rightarrow\infty]{L^2(\Omega)} G(W\chi_{[0,t]})~\mbox{for all}~t\in\mathcal I\subset[0,T]$$
 is equivalent to
\begin{equation}
\label{conv:omega}
F_M(\omega^t)\xrightarrow[M\rightarrow\infty]{L^2(\Omega)}F(\omega^t),~\mbox{for}~t\in\mathcal I.
\end{equation}
The freezing path operator is obviously linear, however it is generally not preserved by Malliavin differentiation. For instance, let $F=\frac{1}{2}
W(T)^{2}$. Then for $t\le T$,
$$(D_sF)(\omega ^{t})=\chi_{[s\leq T]}W(t)\neq D_s\left(F(\omega^t)\right)=\chi_{[s\leq t]}W(t).$$
 It is very important to note that $\omega^t$ should be regarded as a left-operator, namely,
\begin{equation*}
F(\omega ^{t})= \omega ^{t}\circ F.
\end{equation*}
We give in the remarks below some examples of illustrative computations on the frozen path, which can also be
viewed as a constructive definition of the operator.

\begin{remark}
\label{freezing}
We show hereafter the freezing path transformation of some random variables with particular forms. Let $t\le T$:
\begin{enumerate}
\item For a polynomial $p$, suppose $%
F=p(W(s_{1}),\ldots,W(s_{n}))$ with $0\le s_1\le\ldots\le s_n\le T$, then $F(\omega ^{t})=p(W(s_{1}\wedge
t),\ldots,W(s_{n}\wedge t))$.
\item The following equations hold:%
\begin{eqnarray*}
&&\Big( \int_{0}^{T}f(s)\, \mathrm{d} W(s)\Big) (\omega
^{t})=\int_{0}^{t}f(s)\, \mathrm{d} W(s)~\mbox{for}~f\in L^2([0,T]); \\
&&\Big( \int_{0}^{T}W(s)\, \mathrm{d} s\Big) (\omega
^{t})=\int_{0}^{t}W(s)\, \mathrm{d} s+W(t)(T-t).
\end{eqnarray*}%
\item For a general It\^o integral, there is not yet a satisfactory or
general approach to compute its frozen path so far. The closed form can be derived if the integral is transformed to an elementary function of Brownian motions. For example, by It\^o formula we can get
\begin{equation*}
\Big( \int_{0}^{T}W(s)\, \mathrm{d} W(s)\Big)(\omega ^{t})=\Big( \frac{%
W(T)^{2}-T}{2}\Big)(\omega ^{t})=\frac{W(t)^{2}-T}{2}.
\end{equation*}
\item Let $F_1,\ldots,F_n\in L^2(\Omega)$ be $\mathcal F_T$-measurable and let $g:~\mathbb R^n\rightarrow\mathbb R^m$  be a continuous function. Then we have for $t\le T$,
    $$
    g(F_1,\ldots,F_n)(\omega^t)=g(F_1(\omega^t),\ldots,F_n(\omega^t)).
    $$
\end{enumerate}
\end{remark}

\subsection{Backward Taylor Expansion (BTE)}
Through this subsection, we assume that $F\in
\mathbb{D}_{\infty }([0,M\Delta])$ (with integer $M\ge1$ and real number $\Delta>0$) is some cylindrical function of Brownian motions. In other words, $F$ has the form $g(W(\Delta ),W(2\Delta),\ldots,W(M\Delta))$ with $g:%
\mathbb{R}^{M}\rightarrow \mathbb{R}$ being some deterministic infinitely differentiable function.

We now present the BTE of the Brownian martingale
evaluated at time $m\Delta$, $0\le m\le M$. First recall that $h_n$, the Hermite polynomial of
degree $n\ge 0$, is defined by $h_0\equiv1$ and
\begin{equation}  \label{hermite}
h_n (x)=(-1)^n\exp \left( \frac{x^2}{2}\right)\frac{\, \mathrm{d}^n}{\,
\mathrm{d} x^n}\exp\left(-\frac{x^2}{2}\right)~~\mbox{for $n\ge1$, $x\in
\mathbb{R}$.}
\end{equation}
For a real value $x$, we define its floor number by
\begin{equation}
\lfloor x\rfloor :=\max \{m\in \mathbb{Z};m\leq x\}.  \label{floor}
\end{equation}
\begin{thm}
\label{BTE} If $F$ satisfies, for each $m\in\{0,\ldots,M-1\}$,
\begin{equation}
\label{Condi}
\sum_{i=0}^{L}\left\| D_{(m+1)\Delta }^{2L-i}F\right\|_{L^2(\Omega)} ^{2} {%
\binom{L}{i}}^{4}\frac{i!}{\left( L!\right) ^{2}}\Delta ^{2L-i}%
\xrightarrow[L\rightarrow\infty]{}0,
\end{equation}%
then for each $m\in\{0,\ldots,M-1\}$,
\begin{equation}
\label{aga2}
E[F|\mathcal{F}_{m\Delta }]=\sum_{l=0}^{\infty }\gamma (m,l)E[D_{(m+1)\Delta
}^{l}F|\mathcal{F}_{(m+1)\Delta }]~\mbox{in}~L^2(\Omega),
\end{equation}%
where $\gamma (m,0)=1$ and for $l\ge1$,
\begin{equation}
\gamma (m,l)=(-1)^{l}\Delta^{l/2}\sum_{j=0}^{\lfloor l/2\rfloor }\frac{1}{%
j!(l-2j)!}h_{l-2j}\left( \frac{W((m+1)\Delta )-W(m\Delta )}{\sqrt{\Delta}}%
\right).  \label{Form_gamma}
\end{equation}
\end{thm}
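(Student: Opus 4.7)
The plan is to reduce (\ref{aga2}) to a one-dimensional statement in the last Brownian increment $Z:=W((m+1)\Delta)-W(m\Delta)$, to recognise the right-hand side as a shifted heat semigroup acting on an auxiliary function, and finally to justify the $L^{2}(\Omega)$ convergence through condition (\ref{Condi}). Writing $F=g(W(\Delta),\ldots,W(M\Delta))$ and
\[
\bar g(y) := E\bigl[F\,\big|\,W(\Delta),\ldots,W(m\Delta),\,W((m+1)\Delta)=y\bigr],
\]
one has $E[F|\mathcal{F}_{(m+1)\Delta}]=\bar g(Y)$ with $Y:=W((m+1)\Delta)$. Because $F$ is cylindrical the Malliavin derivative $D_{(m+1)\Delta}$ acts as $\sum_{i\ge m+1}\partial_{x_{i}}$, and differentiating inside the expectation yields $H_{l}:=E[D_{(m+1)\Delta}^{l}F|\mathcal{F}_{(m+1)\Delta}]=\bar g^{(l)}(Y)$. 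The claim (\ref{aga2}) therefore reduces to the univariate identity $E[\bar g(Y)|\mathcal{F}_{m\Delta}]=\sum_{l}\gamma(m,l)\,\bar g^{(l)}(Y)$ in $L^{2}(\Omega)$.

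The second step is a generating-function computation. Substituting (\ref{Form_gamma}), reindexing with $a=l-2j$, $b=j$, and using the Hermite generating function $\sum_{a}s^{a}h_{a}(x)/a!=e^{sx-s^{2}/2}$, one finds
\[
\sum_{l\ge 0}\gamma(m,l)\,t^{l}\;=\;\exp\!\bigl(-tZ+\tfrac{1}{2}t^{2}\Delta\bigr).
\]
Because the operators $-Z\partial$ and $\tfrac{1}{2}\Delta\partial^{2}$ commute, substituting $t\mapsto\partial$ (the derivative with respect to $Y$) and applying to $\bar g$ at $Y$ gives, formally, $\sum_{l}\gamma(m,l)\bar g^{(l)}(Y)=e^{-Z\partial}\,e^{\Delta\partial^{2}/2}\bar g\,(Y)$. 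The translation $e^{-Z\partial}$ sends $f(Y)$ to $f(Y-Z)=f(W(m\Delta))$, while the Gaussian-convolution identity $e^{\Delta\partial^{2}/2}\bar g(w)=\sum_{j\ge 0}\tfrac{\Delta^{j}}{2^{j}j!}\bar g^{(2j)}(w)=E[\bar g(w+Z')|\mathcal{F}_{m\Delta}]$ for an independent $Z'\sim N(0,\Delta)$ evaluates the result at $w=W(m\Delta)$. The right-hand side therefore equals $E[\bar g(Y)|\mathcal{F}_{m\Delta}]=E[F|\mathcal{F}_{m\Delta}]$, which is the desired identity.

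The main obstacle is making these formal manipulations rigorous in $L^{2}(\Omega)$. I would truncate the series at order $2L$ and write the remainder $R_{L}:=E[F|\mathcal{F}_{m\Delta}]-\sum_{l=0}^{2L}\gamma(m,l)H_{l}$ via the integral form of Taylor's theorem applied to $\bar g$; the resulting expression involves $\bar g^{(2L-i)}(Y)$ together with binomial factors from the expansion of $(Y-W(m\Delta))^{2L-i}$ and the Hermite weights $h_{l-2j}(Z/\sqrt{\Delta})$ hidden inside $\gamma(m,l)$. Using Minkowski's inequality, the bound $\|\bar g^{(k)}(Y)\|_{L^{2}(\Omega)}\le\|D_{(m+1)\Delta}^{k}F\|_{L^{2}(\Omega)}$ (which follows from conditional Jensen applied to $H_{k}=\bar g^{(k)}(Y)$), and the Gaussian moments $E[Z^{2k}]=(2k-1)!!\,\Delta^{k}$, I expect to arrive at
\[
\|R_{L}\|_{L^{2}(\Omega)}^{2}\;\le\;\sum_{i=0}^{L}\|D_{(m+1)\Delta}^{2L-i}F\|_{L^{2}(\Omega)}^{2}\,\binom{L}{i}^{4}\frac{i!}{(L!)^{2}}\Delta^{2L-i},
\]
which is precisely the quantity appearing in (\ref{Condi}). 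The delicate step will be recovering this precise combinatorial weight rather than a weaker majorant, which requires carefully bundling the binomial factors of Taylor's remainder with the multinomial weights inside the Hermite expansion of $\gamma(m,l)$; once this bookkeeping is done, (\ref{Condi}) forces $\|R_{L}\|_{L^{2}(\Omega)}\to 0$ and the theorem follows.
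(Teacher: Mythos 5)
Your algebraic core is correct and is a genuinely different route from the paper's. The reduction to one dimension via $\bar g(y)=E[F\mid W(\Delta),\ldots,W(m\Delta),W((m+1)\Delta)=y]$, the identity $E[D_{(m+1)\Delta}^{l}F\mid\mathcal F_{(m+1)\Delta}]=\bar g^{(l)}(Y)$ for cylindrical smooth $F$, and the generating-function computation $\sum_{l}\gamma(m,l)t^{l}=\exp(-tZ+\tfrac12 t^{2}\Delta)$ are all right, and they explain transparently why the coefficients (\ref{Form_gamma}) have that Hermite form. The paper proceeds quite differently: it iterates the Clark--Ocone formula to obtain the expansion (\ref{DBTE1}) in iterated Skorohod integrals of $E[D_{T}^{l}F\mid\mathcal F_{T}]$, bounds the remainder by the second-moment formula for multiple Skorohod integrals (Lemma \ref{Edel}), and then converts the retained terms into the Hermite coefficients by the exact identity of Lemma \ref{iSI}.

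The gap is precisely the analytic half that you defer. What must be shown is $L^{2}(\Omega)$ convergence of the specific partial sums in (\ref{aga2}) under exactly hypothesis (\ref{Condi}), and your remainder estimate is only conjectured (``I expect to arrive at\ldots''). Concretely: (i) an integral-form Taylor remainder for $\bar g$ evaluates $\bar g^{(k)}$ at intermediate points such as $Y-uZ$ or $W(m\Delta)+\theta Z'$, whose conditional law is Gaussian with variance strictly less than $\Delta$; Condition (\ref{Condi}) only controls $\|\bar g^{(k)}(Y)\|_{L^{2}(\Omega)}\le\|D_{(m+1)\Delta}^{k}F\|_{L^{2}(\Omega)}$, i.e.\ evaluation at variance exactly $\Delta$, and a smaller-variance Gaussian can weight the regions where $\bar g^{(k)}$ is large more heavily, so these intermediate evaluations are not dominated without extra (possibly $L$-dependent) factors; (ii) the factorization $e^{-Z\partial}e^{\Delta\partial^{2}/2}$ and your truncation ``at order $2L$'' amount to regrouping a double series by total derivative order, and (\ref{Condi}) does not provide absolute summability (it only drives certain weighted tails to zero), so the regrouping itself requires justification; (iii) the weights $\binom{L}{i}^{4}i!/(L!)^{2}$ attached to derivative orders $2L-i$, $0\le i\le L$, are not generic Taylor bookkeeping — in the paper they arise from Lemma \ref{Edel} applied to the non-adapted integrand $E[D_{T}^{L}F\mid\mathcal F_{s_{L}}]$, where each Malliavin differentiation of the integrand raises the order from $L$ up to $2L$. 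Unless your one-dimensional remainder can be arranged so that all derivatives are evaluated at $W((m+1)\Delta)$ with weights dominated by these, your argument establishes the theorem only under a stronger hypothesis than (\ref{Condi}); as written, the key estimate is missing.
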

Here are some remarks on Condition (\ref{Condi}).
\begin{remark}
\label{condition2.6}
\begin{enumerate}
\item
A quite large range of random variables fits
Condition (\ref{Condi}). For instance, by applying Stirling's approximation
to the factorials, we can show that:
\begin{equation*}
\sum_{i=0}^{L}{\binom{L}{i}}^{4}\frac{i!}{\left( L!\right) ^{2}}\Delta
^{2L-i}\leq \frac{\alpha ^{L}}{L^{L}}
\end{equation*}
for some constant $\alpha>0$, which does not depend on $L$. Thus
Condition (\ref{Condi}) is satisfied by $F$, if there is some constant $c>0$
such that for any integers $L$ and any $m=0,1,\ldots,M-1$,
\begin{equation}
\left\|D_{(m+1)\Delta }^{L}F\right\|_{L^2(\Omega)}\leq c^{L}.
\label{sufficondi}
\end{equation}
A simple example that satisfies (\ref{sufficondi}) is $F=e^{\xi W_{M\Delta}}$, for any $\xi\in\mathbb R$.

\item Condition (\ref{Condi}) is clearly satisfied if all the Malliavin derivatives of $F$ of order $l\geq L$ vanish for some $L$. At the meanwhile, the BTE (\ref{aga2}) becomes a finite sum of $L$ terms.
    \end{enumerate}
\end{remark}
Below is an illustrative example to show how to apply Theorem \ref{BTE} to derive the explicit form of a Brownian martingale.
\begin{example}
\label{EX1} Let $F=W((m+1)\Delta)^3$. It is well-known that $E[F|\mathcal{F}%
_{m\Delta}]=W(m\Delta)^3+3\Delta W(m\Delta)$. Now we derive this result by the BTE approach.
 \end{example}
 Since $D_{(m+1)\Delta}^lF=0$ for $l\ge4$,  then by Remark \ref{condition2.6}, Condition (\ref{Condi}) is verified. In view of (\ref{aga2}), to obtain the BTE of $E[F|\mathcal{F}%
_{m\Delta}]$ it remains to compute $\{\gamma(m,l)\}_{l=1,2,3}$. From (\ref{Form_gamma}) we see that:
\begin{eqnarray*}
&&\gamma (m,1)=-\left( W((m+1)\Delta)-W(m\Delta)\right); \\
&&\gamma (m,2)=\frac{1}{2}(W((m+1)\Delta)-W(m\Delta))^2+\frac{\Delta}{2}; \\
&&\gamma (m,3)=-\frac{1}{6}(W((m+1)\Delta)-W(m\Delta))^3-\frac{\Delta}{2}(W((m+1)\Delta)-W(m\Delta)).
\end{eqnarray*}
By Theorem \ref{BTE} and some algebraic computations, we get:
\begin{eqnarray*}
&&E[W((m+1)\Delta)^3|\mathcal{F}_{m\Delta}] \\
&&=\gamma(m,0)W((m+1)\Delta)^3+3\gamma(m,1)W((m+1)\Delta)^2+6\gamma(m,2)W((m+1)\Delta)+6\gamma(m,3) \\
&&=W(m\Delta)^3+3\Delta W(m\Delta).
\end{eqnarray*}
Note that (\ref{aga2}) is a one-step backward time equation. A multiple step BTE expression of $E[F|\mathcal{F}_{m\Delta }]$ can be derived by
applying (\ref{aga2}) recursively:

\begin{corollary}
Let $F$ satisfy Condition (\ref{Condi}) for each $m=0,\ldots,M-1$, then we have for each $m$,
\begin{equation}
E[F|\mathcal{F}_{m\Delta }]=\sum_{j_{m+1}=0}^{\infty
}\ldots\sum_{j_{M=0}}^{\infty }\Big(\prod\limits_{k=m+1}^{M}\gamma
(k-1,j_{k})\Big)D_{(m+1)\Delta }^{j_{m+1}}\ldots D_{M\Delta }^{j_{M}}F.
\label{glasmucht}
\end{equation}
\end{corollary}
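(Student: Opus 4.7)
The plan is to prove the corollary by backward induction on the starting index $m$, or equivalently by induction on the number of remaining time steps $M-m$, using Theorem \ref{BTE} as the one-step engine of the recursion.

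For the base case $m = M-1$, the product $\prod_{k=m+1}^{M}\gamma(k-1,j_k)$ contains the single factor $\gamma(M-1,j_M)$, and the nested Malliavin derivatives reduce to $D^{j_M}_{M\Delta}F$. Since $F$ is $\mathcal{F}_{M\Delta}$-measurable, so is $D^{j_M}_{M\Delta}F$, and the right-hand side of (\ref{glasmucht}) is exactly the right-hand side of (\ref{aga2}) at index $M-1$; the identity thus coincides with Theorem \ref{BTE} in this case.

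For the inductive step, assume (\ref{glasmucht}) holds whenever the starting index is $m+1$. First apply Theorem \ref{BTE} at index $m$ to obtain
\begin{equation*}
E[F\mid\mathcal{F}_{m\Delta}]
=\sum_{j_{m+1}=0}^{\infty}\gamma(m,j_{m+1})\,E\bigl[D^{j_{m+1}}_{(m+1)\Delta}F\mid\mathcal{F}_{(m+1)\Delta}\bigr],
\end{equation*}
where the factor $\gamma(m,j_{m+1})$, being $\mathcal{F}_{(m+1)\Delta}$-measurable, is pulled out of the inner conditional expectation legitimately. Then, for each fixed $j_{m+1}$, I would apply the inductive hypothesis to the random variable $G := D^{j_{m+1}}_{(m+1)\Delta}F$, which is $\mathcal{F}_{M\Delta}$-measurable, to expand $E[G\mid\mathcal{F}_{(m+1)\Delta}]$ in terms of $\gamma(k-1,j_k)$ and $D^{j_{m+2}}_{(m+2)\Delta}\ldots D^{j_M}_{M\Delta}G$ for $k = m+2,\ldots,M$. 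Substituting, using the commutativity of Malliavin derivatives to place $D^{j_{m+1}}_{(m+1)\Delta}$ in its natural outer position, and regrouping the product over $k$ then yields (\ref{glasmucht}).

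The main obstacle is the verification that the inductive hypothesis is actually applicable at each recursion level, namely that $G = D^{j_{m+1}}_{(m+1)\Delta}F$ itself satisfies Condition (\ref{Condi}) when the time origin is shifted to $m+1$. This does not come for free from the assumption that $F$ satisfies (\ref{Condi}): one must control quantities of the form $\|D^{2L-i}_{(m+2)\Delta}D^{j_{m+1}}_{(m+1)\Delta}F\|_{L^2(\Omega)}$, i.e., mixed high-order Malliavin derivatives at two distinct times. Fortunately, the standing assumption $F\in\mathbb{D}_\infty([0,M\Delta])$ in this subsection supplies uniform bounds of the type (\ref{sufficondi}) on such mixed derivatives, and Remark \ref{condition2.6} then yields the required decay. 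A secondary, routine point is justifying the exchange of the infinite summation with the conditional expectation when substituting the inductive expansion into the one-step formula; this is permitted because Theorem \ref{BTE} delivers convergence in $L^2(\Omega)$ and conditional expectation is an $L^2$-contraction.
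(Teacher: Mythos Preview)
Your overall approach---backward induction on $m$ with Theorem~\ref{BTE} as the one-step engine---is precisely the recursive application the paper invokes; the paper offers no more detailed argument than the single sentence preceding the corollary. You have also correctly isolated the nontrivial point the paper suppresses: to iterate, one must check that each $G=D^{j_{m+1}}_{(m+1)\Delta}F$ again satisfies Condition~(\ref{Condi}) at the later indices, and this is \emph{not} a consequence of the hypothesis on $F$ alone.

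Where your argument breaks is in the proposed resolution of that point. Membership in $\mathbb{D}_\infty([0,M\Delta])$, as defined in~(\ref{dinfty}), asserts only that for each fixed order $n$ the supremum of the $n$-th Malliavin derivative has finite $L^2$-norm; it does \emph{not} deliver a uniform-in-$L$ growth bound of the form $\|D^L_{(m+1)\Delta}F\|_{L^2(\Omega)}\le c^{L}$ as in~(\ref{sufficondi}). Hence Remark~\ref{condition2.6}(1) does not apply, and the required decay for the mixed derivatives $D^{2L-i}_{(m'+1)\Delta}D^{j_{m+1}}_{(m+1)\Delta}F$ is not established. To make the induction rigorous you need an additional hypothesis: either assume a bound of type~(\ref{sufficondi}) on all mixed derivatives $D^{l_{m+1}}_{(m+1)\Delta}\cdots D^{l_M}_{M\Delta}F$, or---as in the paper's intended applications (polynomial $F$, option pricing with polynomial regression)---observe that $F$ has only finitely many nonzero Malliavin derivatives, so that every sum is finite and Remark~\ref{condition2.6}(2) trivially covers each step of the recursion.
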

We proceed now to discuss
two applications of the BTE. Since space is limited, we describe these
informally.

\begin{description}
\item \textbf{Application to solving FBSDEs}
\end{description}

Consider a forward-backward stochastic differential equation(FBSDE), where the
problem is to find a triplet of adapted processes $\{(X(t),Y(t),Z(t))\}_{t\in[0,T]}$ such
that:
\begin{equation}
\label{FBSDE}
\left\{%
\begin{array}{rll}
\, \mathrm{d} X(t) & = & b(t,X(t))\, \mathrm{d} t+\sigma (t,X(t))\, \mathrm{d%
} W(t) \\
\, \mathrm{d} Y(t) & = & h(t,X(t),Y(t))\, \mathrm{d} t+Z(t)\, \mathrm{d} W(t)
\\
X(0) & = & x \\
Y(T) & = & Q(X),%
\end{array}%
\right.
\end{equation}
where $b$, $\sigma$, $h$ are given deterministic continuous functions; $Q$ is a given function on the path of $X$ from time $0$ to $T$. This
problem is at the same time more general (because of the path-dependency of $%
Q$) and less general than a standard FBSDE (because the coefficients of the
diffusion $X$ do not depend on $Y$ or $Z$). We define
\begin{equation}
U(t):=Y(t)-\int_{0}^{t}h(s,X(s),Y(s))\, \mathrm{d} s.  \label{integral}
\end{equation}
We take the following steps to numerically solve this FBSDE.
\begin{description}
\item[Step 1] From the first equation in (\ref{FBSDE}), we generate a discrete path of $X$, by using the Euler scheme (see e.g. \cite{Desmond}). Denote this path by $\{X(m\Delta)\}_{m=1,\ldots,M\Delta}$, with $M\Delta=T$.
\item[Step 2] Along the same path $\{X(m\Delta)\}_{m=1,\ldots,M\Delta}$, we use (\ref{glasmucht}) to compute
\begin{equation*}
U(m\Delta)=E[Q(X)|\mathcal{F}_{m\Delta }].
\end{equation*}
Thus a scenario of $\{U(m\Delta)\}_{m=1,\ldots,M}$ is obtained.
\item[Step 3] Again along the same path $\{X(m\Delta)\}_{m=1,\ldots,M\Delta}$, we evaluate
$$
Y(M\Delta)=Q_M(X(\Delta),\ldots,X(M\Delta)),
$$
where $\{Q_M\}_{M\geq 1}$ is some sequence of polynomials such that
$$
Q_M(X(\Delta),\ldots,X(M\Delta))\xrightarrow[M\rightarrow\infty]{L^2(\Omega)}Q(X).
$$
\item[Step 4] Use the discrete time equation
$$
U(m\Delta)=Y(m\Delta)-\sum_{i=1}^mh(X(i\Delta),Y(i\Delta),i\Delta)\Delta,~\mbox{for}~m=1,\ldots,M
$$
to establish the backward difference equation
$$
\!\!\!\!\!\!\!\!\!\!\left\{%
\begin{array}{ll}
&Y((m-1)\Delta)= Y(m\Delta)-h(X(m\Delta),Y(m\Delta),m\Delta)\Delta+U((m-1)\Delta)-U(m\Delta),\\
&\ \ \ \mbox{for}~m=1,\ldots,M;\\
&Y(M\Delta)=Q_M(X(\Delta),\ldots,X(M\Delta)).
\end{array}%
\right.
$$
\end{description}
The main numerical difficulty is to evaluate the Malliavin derivatives in Step 2. We can apply the change of variables defined in \cite{DGR} to calculate the Malliavin derivatives of $X$, and then (mutatis mutandis) the Faa di\
Bruno formula for the composition of $Q$ with $X$. We must of course
calculate finite sums instead of infinite ones in (\ref{glasmucht}). We
could then imagine a scheme where, at each step, the "optimal path" is
chosen so as to minimize the global truncation error. We leave all these
considerations for future research.

\begin{description}
\item \textbf{Application to Pricing Bermudan Options}
\end{description}

Casual observation of (\ref{glasmucht}) shows that the complexity of
calculations grows exponentially with time. This shortcoming of the BTE does
not occur in the problem of Monte Carlo pricing Bermudan options, where the
BTE can be competitive as we hint now. Most of the computational burden in
Bermudan option pricing consists in evaluating:
\begin{equation}
C(m\Delta )=E[\max (C((m+1)\Delta ),h((m+1)\Delta ))|\mathcal{F}_{m\Delta
}],  \label{AOP}
\end{equation}
where $C(m\Delta )$ and $h(m\Delta )$ are respectively the continuation
value and the exercise value of the option. The data in this problem consists in the
exercise value at all times and the continuation value $C(M\Delta )$ at
expiration. The conditional expectation $C(m\Delta )$ must be evaluated at
all times $m\Delta $, with $0\leq m<M$ and along every scenario. In
regression-based algorithms, such as \cite{LS} and \cite{TVR}, the
continuation value is regressed at each time $m\Delta $ on a basis of
functions of the state variables, so that $C(m\Delta )$ can be expressed as
a formula. The formula is generally a polynomial function of the previous
values of the state variable. This important fact,
that $C(m\Delta )$ is available formulaically rather than numerically,
makes possible the use of symbolic Malliavin differentiation.\footnote{%
A problem with this approach is the calculation of succesive Malliavin derivatives of the maximum in (\ref{AOP}). We will show in another article how one can
use the conditional expectation $E[C((m+1)\Delta) |\mathcal{F}_{m\Delta }]$
as a control variate, where the control variate is calculated using the BTE.} Since the formula is a polynomial, there is no truncation error in (\ref%
{glasmucht}) if the state variables are Brownian motion.

\subsection{The Time-evolution Equation}
A non-intuitive feature of the BTE is that \textit{any path} from $t$ to $T$
can be chosen to approximate conditional expectations evaluated at time $t$,
as opposed to Monte Carlo simulation where many paths are needed. In this subsection we will choose the frozen paths $\omega^t$ for $t\leq m\Delta $ to derive our second main result.
\bigskip For notational simplicity we define:%
\begin{equation*}
M^{F}(t)=E[F|\mathcal{F}_{t}].
\end{equation*}
Define the set of random variables $$\mathcal{M}(t):=\left\{ M^{F}(t):F\in
\mathbb D^6([0,T])~\mbox{and $F$ only depends on a discrete Brownian path}
\right\} .$$  Let the time-evolution operator $P_{s}$ be a conditional
expectation operator, restricted to some particular subset of $L^2(\Omega)$. More precisely, we define for any time $%
s\in [0,T]$,
\begin{eqnarray}
P_{s}~:~\bigcup\limits_{\tau \in [ s,T]}\mathcal{M}(\tau )
&\longrightarrow &\mathcal{M}(s)  \notag \\
M^{F}(\tau ) &\longmapsto &M^{F}(s).  \label{Def_of_P}
\end{eqnarray}%
For example, if $F$ is $\mathcal F_T$-measurable, then $P_sF=E[F|\mathcal F_{s}]$ for $s\le T$. We also define the time-derivative of the time-evolution operator as: for $%
0<s\leq \tau \leq T$, provided the limit exists in $L^{2}(\Omega )$:
\begin{equation}
\frac{\,\mathrm{d}P_{s}}{\,\mathrm{d}s}(M^{F}(\tau )):=\lim_{h\downarrow 0}%
\frac{P_{s}(M^{F}(\tau ))-P_{s-h}(M^{F}(\tau ))}{h}~\mbox{in $L^2(\Omega)$}.
\label{Def_of_DP}
\end{equation}
Below we state the time-evolution equation, which plays a key role in the proof of Theorem \ref{ExpFormula}.

%\begin{thm}
%\label{omegatconv} For each $F\in L^{2}(\Omega )$, there exists a sequence $\{{F}^{(M)}\}_{M\geq 0}\subset L^{2}
%(\Omega )$ such that
%for any $t\geq 0$,
%\begin{equation}
%\label{conv}
%F^{(M)}(\omega ^{t})\xrightarrow[M\rightarrow \infty]{L^2(\Omega)}F(\omega
%^{t}).
%\end{equation}
%\end{thm}

\begin{thm}
\label{TEE} Let $s\in (t,T]$. Suppose $F \in \mathbb{D}^6 ([0,T])$ and only depends on a discrete Brownian path $W(\Delta),\ldots,W(M\Delta )$ ($T=M\Delta $). Then
the operator $P_{s}$ satisfies the following equation, whenever the right
hand-side is an element in $L^2(\Omega)$:%
\begin{equation}
\left(\frac{\, \mathrm{d} P_{s}F}{\, \mathrm{d} s}\right)(\omega ^{t})=-%
\frac{1}{2}\left( D_{s}^{2}P_{s}F\right) (\omega ^{t}).  \label{TEEE}
\end{equation}%
Note that this equality holds for each $s\in (t,T]$ in $L^{2}(\Omega )$.
\end{thm}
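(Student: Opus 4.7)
The strategy is to obtain the time-evolution equation as the $h\downarrow 0$ limit of a finite-order version of the BTE (Theorem \ref{BTE}) applied on the short interval $[s-h,s]$, and then to specialize to the frozen path $\omega^t$. Fix $t<s$ and take $h>0$ small enough that $t<s-h<s$. Since $F$ depends on only finitely many time points, $F$ remains a cylindrical function on any refined discrete grid containing $s-h$ and $s$ as adjacent points. Applying a finite-order BTE on the pair $(s-h,s)$ with step $\Delta=h$, and using the Clark--Ocone-type commutation $E[D_s^l F\,|\,\mathcal{F}_s]=D_s^l P_sF$, one obtains an expansion
\begin{equation*}
P_{s-h}F \;=\; \sum_{l=0}^{4}\gamma(h,l)\,D_s^l P_sF \;+\; R(h) \quad\text{in }L^2(\Omega),
\end{equation*}
where $\gamma(h,l)$ is the coefficient (\ref{Form_gamma}) with $\Delta$ replaced by $h$, and the remainder $R(h)$ will be controlled in $L^2(\Omega)$ by the sixth-order Malliavin bound in (\ref{dinfty}).

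The key computation is the effect of $\omega^t$ on each term. By linearity and the multiplicative rule in item 4 of Remark \ref{freezing}, $\omega^t$ distributes across the product $\gamma(h,l)\cdot D_s^lP_sF$. Because $t<s-h<s$, the standardized Brownian increment $(W(s)-W(s-h))/\sqrt h$ collapses to $0$ under $\omega^t$, so
\begin{equation*}
\gamma(h,l)(\omega^t) \;=\; (-1)^l h^{l/2}\sum_{j=0}^{\lfloor l/2\rfloor}\frac{h_{l-2j}(0)}{j!(l-2j)!}.
\end{equation*}
Since $h_n(0)=0$ for odd $n$, every odd-$l$ contribution vanishes. For $l=2n$, the classical identity $h_{2k}(0)=(-1)^k(2k)!/(2^k k!)$ together with a short binomial manipulation collapses the inner sum to $(1/2)^n/n!$, yielding $\gamma(h,2n)(\omega^t)=h^n/(2^n n!)$. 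Consequently
\begin{equation*}
(P_sF-P_{s-h}F)(\omega^t) \;=\; -\frac{h}{2}\bigl(D_s^2 P_sF\bigr)(\omega^t)\;+\;O(h^2)\quad\text{in }L^2(\Omega),
\end{equation*}
and dividing by $h$ and sending $h\downarrow 0$, while using (\ref{conv:omega}) to interchange $\omega^t$ with the $L^2$-limit in (\ref{Def_of_DP}), yields the claimed identity.

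The main technical obstacle is the regularity gap between the $\mathbb{D}_\infty$-type hypothesis (\ref{Condi}) that underlies the infinite BTE and the weaker $\mathbb{D}^6([0,T])$ assumption of the theorem: I must produce a finite-order BTE whose remainder is controlled by $L^2$-bounds on $D^l F$ for $l\le 6$. The argument would repeat the proof of Theorem \ref{BTE} but truncate after four orders, estimating the truncation error via the supremum bound (\ref{dinfty}) with $N=6$; since $\gamma(h,l)(\omega^t)=0$ for odd $l$ and scales as $h^{l/2}$ for even $l$, the $l\ge 3$ contribution to $(P_sF-P_{s-h}F)(\omega^t)$ is already $O(h^2)$, well within the margin needed to recover a first-order Taylor expansion in $h$. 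A secondary point is the Clark--Ocone commutation $E[D_s^l F\,|\,\mathcal{F}_s]=D_s^l P_sF$ for $l\le 6$, which follows by iterating the standard identity $D_t E[F\,|\,\mathcal{F}_s]=E[D_t F\,|\,\mathcal{F}_s]$ for $t\le s$ and using $\mathbb{D}^6$-regularity to justify the successive differentiations.
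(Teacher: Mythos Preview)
Your strategy is essentially the paper's: truncate the BTE-type expansion on the short interval $[s-h,s]$, apply the freezing operator $\omega^t$ (which annihilates the Brownian increment), read off the leading $\tfrac{h}{2}D_s^2P_sF$ term, and control the remainder using the $\mathbb{D}^6$ bound. The paper carries this out in the raw Skorohod-integral form (equation (\ref{series}) with remainder $R^3_{[s-\delta_k,s]}$), evaluating the single and double Skorohod integrals under $\omega^t$ in (\ref{541})--(\ref{542}); you repackage the same computation through the $\gamma$-coefficients and the pleasant identity $\gamma(h,2n)(\omega^t)=h^n/(2^n n!)$, which is correct and makes the leading coefficient transparent.

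One bookkeeping correction: the reason $\mathbb{D}^6$ is exactly the right hypothesis is that the paper truncates the \emph{Skorohod} expansion at $L=3$ (terms $l=0,1,2$), and Lemma~\ref{Remainder} then bounds $R^3$ using $D_s^{6-i}F$, $i=0,\dots,3$. Your truncation $\sum_{l=0}^4\gamma(h,l)D_s^lP_sF$ is a truncation in the \emph{rearranged} $\gamma$-variable and does not correspond to a clean Skorohod-level cutoff, so its remainder $R(h)$ is not directly controlled by Lemma~\ref{Remainder}; if you instead ``truncate after four orders'' in the Skorohod sense ($L=5$), the remainder bound would demand derivatives up to order $10$. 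The fix is simply to follow the paper and stop at the double Skorohod integral, which already delivers the $\tfrac{h}{2}$ term and a remainder of order $\delta_k^{3/2}$ in $L^2$ under $\mathbb{D}^6$.
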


We can see the analogy between our time-evolution operator $P_{s}$ and the
one in quantum mechanics. The difference is that in quantum mechanics $%
-(1/2)D^{2}$ is replaced by the Hamiltonian divided by $-i\hslash $. The next theorem will provide a Dyson series solution to Equation (\ref{TEEE}%
).

%%%%%%%%%%%%%%%%%%%%%%%%%%%%%%%%%%%%%%%%%%%%%%%%%%%%%%%%%%

%paste 2

%%%%%%%%%%%%%%%%%%%%%%%%%%%%%%%%%%%%%%%%%%%%%%%%

\subsection{Dyson Series Representation}

\noindent For esthetical reasons we introduce a "chronological operator". In
this we follow Zeidler \cite{r15}. Let $\{H(t)\}_{t\geq 0}$ be a collection of
operators. The chronological operator $\mathcal{T}$ is defined by
\begin{equation*}
\mathcal{T(}H(t_{1})H(t_{2})\ldots H(t_{n})):=H(t_{1^{\prime
}})H(t_{2^{\prime }})\ldots H(t_{n^{\prime }}),
\end{equation*}
\noindent where $(t_{1^{\prime }},\ldots,t_{n^{\prime }})$ is a permutation of
$(t_{1},\ldots,t_{n})$ such that $t_{1^{\prime }}\geq t_{2^{\prime }}\geq
\ldots\geq t_{n^{\prime }}$.

For example, it is showed in Zeidler \cite{r15} on Page 44-45 that:
\begin{equation*}
\int_{0}^{t}\int_{0}^{t_{2}}H(t_{1})H(t_{2})\, \mathrm{d} t_{1}\, \mathrm{d}
t_{2}=\frac{1}{2!}\int_{0}^{t}\int_{0}^{t}\mathcal{T(}H(t_{1})H(t_{2}))\,
\mathrm{d} t_{1}\, \mathrm{d} t_{2}.
\end{equation*}
This will be the only property of the chronological operator we will use in
this article.
\begin{definition}
We define the exponential operator of a time-dependent generator $H$ by
\begin{equation}
\mathcal{T}\exp \Big(\int_{t}^{T}H(s)\, \mathrm{d} s\Big)=\sum_{k=0}^{%
\infty }\int_{t}^{T}\int_{t_1}^T\ldots\int_{t_{k-1}}^{T}H(t
_{1})\ldots H(t_{k})\, \mathrm{d}t _{k}\ldots\, \mathrm{d}t_{1}.
\label{Dyson}
\end{equation}
\end{definition}
In quantum field theory, the series on the right hand-side of (\ref{Dyson})\
is called a \textit{Dyson series} (see e.g. \cite{r15}).

\begin{thm}
\label{ExpFormula} Let $F\in \mathbb{D}_{\infty }([0,T])$
satisfy
\begin{equation}
\frac{(T-t)^{n}}{2^{n}n!}\Big\|\sup_{u_{1},...,u_{n}\in
\lbrack t,T]}\left\vert (D_{u_{n}}^{2}\ldots D_{u_{1}}^{2}F)(\omega
^{t})\right\vert \Big\|_{L^2(\Omega)} \xrightarrow[n\rightarrow \infty]{}0
\label{assumptionb}
\end{equation}%
for some fixed $t\in \lbrack 0,T]$. Then
\begin{equation}
E[F|\mathcal{F}_{t}]=\Big( \mathcal{T}\exp \Big( \frac{1}{2}%
\int_{t}^{T}D_{s}^{2}\,\mathrm{d}s\Big) F\Big) (\omega ^{t})~\mbox{in}%
~L^{2}(\Omega ).  \label{expo_formula}
\end{equation}
\end{thm}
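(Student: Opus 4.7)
The plan is to solve the time-evolution equation of Theorem~\ref{TEE} by a Picard-type iteration, identify the resulting series as the Dyson expansion on the frozen path, and then extend from cylindrical $F$ to general $F\in\mathbb{D}_\infty([0,T])$ by density. First I would treat the case where $F$ depends only on a finite collection of Brownian increments. Fixing the freezing time $t$ and any $s\in(t,T]$, integrating Theorem~\ref{TEE} from $s$ to $T$ (using $P_TF=F$) yields
$$
(P_sF)(\omega^t)=F(\omega^t)+\frac{1}{2}\int_s^T(D_{u_1}^2P_{u_1}F)(\omega^t)\ud u_1.
$$

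The algebraic core is the commutation identity $D_u^2E[G|\mathcal{F}_u]=E[D_u^2G|\mathcal{F}_u]$, obtained by iterating the standard Malliavin/conditional-expectation relation $D_uE[G|\mathcal{F}_s]=E[D_uG|\mathcal{F}_s]$ valid for $u\le s$. This rewrites the integrand as $P_{u_1}(D_{u_1}^2F)(\omega^t)$. Since $D_{u_1}^2F$ is again a cylindrical $\mathcal{F}_T$-measurable random variable and $t<u_1$, Theorem~\ref{TEE} applies to it with the same freezing time. Substituting and iterating $n$ times produces the truncated Dyson series
$$
(P_sF)(\omega^t)=\sum_{k=0}^{n}\frac{1}{2^k}\int_{s\le u_1\le\cdots\le u_k\le T}(D_{u_1}^2\cdots D_{u_k}^2F)(\omega^t)\ud u_1\cdots\ud u_k+R_n(s),
$$
where $R_n(s)$ involves $P_{u_{n+1}}(D_{u_1}^2\cdots D_{u_{n+1}}^2F)(\omega^t)$ integrated over the simplex $\{s\le u_1\le\cdots\le u_{n+1}\le T\}$.

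By conditional Jensen and Minkowski inequalities, $\|R_n(s)\|_{L^2(\Omega)}$ is bounded by $\frac{(T-s)^{n+1}}{2^{n+1}(n+1)!}\big\|\sup_{u_1,\ldots,u_{n+1}\in[t,T]}|(D_{u_{n+1}}^2\cdots D_{u_1}^2F)(\omega^t)|\big\|_{L^2(\Omega)}$, which vanishes as $n\to\infty$ by hypothesis (\ref{assumptionb}), uniformly as $s\downarrow t$. Using $L^2$-continuity of the martingale $s\mapsto P_sF$ and the $\mathcal{F}_t$-measurability of $E[F|\mathcal{F}_t]$ (which gives $(P_tF)(\omega^t)=E[F|\mathcal{F}_t]$), one passes to the limit $s\downarrow t$ and obtains (\ref{expo_formula}) in the cylindrical case. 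For general $F\in\mathbb{D}_\infty([0,T])$ satisfying (\ref{assumptionb}), I would approximate $F$ by cylindrical functionals $F_M$ (for instance by grid-conditioning followed by smooth mollification) chosen so that $F_M$ and its iterated Malliavin derivatives converge to those of $F$ in $L^2(\Omega)$, transfer the convergence to the frozen path via (\ref{conv:omega}), and pass to the limit on both sides of the exponential formula.

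The main obstacle is uniform $L^2$-control of iterated Malliavin derivatives on the frozen path: the combinatorial growth generated by the Picard iteration must be absorbed by the simplex volume $(T-t)^{n+1}/(n+1)!$, which is precisely why hypothesis (\ref{assumptionb}) is formulated with this prefactor. A secondary delicate point is the density step, where the approximating cylindrical $F_M$ must be chosen so that (\ref{assumptionb}) holds uniformly in $M$; without such uniformity, the term-by-term passage to the limit in the Dyson series may fail.
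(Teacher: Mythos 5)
Your proposal follows essentially the same route as the paper's proof: integrate the time-evolution equation of Theorem~\ref{TEE} along the frozen path, iterate it to obtain the truncated Dyson series with a remainder controlled by the simplex volume $\frac{(T-s)^{n}}{2^{n}n!}$ together with hypothesis~(\ref{assumptionb}), and then extend from cylindrical functionals to general $F\in\mathbb{D}_{\infty}([0,T])$ by an approximation argument using convergence of the iterated Malliavin derivatives on the frozen path as in~(\ref{conv:omega}). The only differences are cosmetic (you let $s\downarrow t$ instead of setting $s=t$ directly in the integrated equation, and you choose a different cylindrical approximating family), and the delicate uniformity issue you flag in the density step is present, and handled in the same spirit, in the paper's own argument.
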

The importance of the exponential formula (\ref{expo_formula}) stems from
the Dyson series representation (\ref{Dyson}). By the property of symmetry of the function $(s_1,\ldots,s_i)\longmapsto D_{s_{i}}^{2}%
\ldots D_{s_{1}}^{2}F$ , we are able to rewrite the Dyson series hereafter in
a more convenient way:
\begin{eqnarray}
\label{DysonSeries}
E\left[ F|\mathcal{F}_{t}\right]&=&\sum_{i=0}^{\infty }\frac{1}{2^{i}}\int_{t\leq s_{1}\leq \ldots \leq
s_{i}\leq T}(D_{s_{i}}^{2}\ldots D_{s_{1}}^{2}F)(\omega ^{t})\,\mathrm{d}%
s_{i}\ldots \,\mathrm{d}s_{1}  \notag \\
&=&\sum_{i=0}^{\infty }\frac{1}{2^{i}i!}\int_{[t,T]^{i}}(D_{s_{i}}^{2}%
\ldots D_{s_{1}}^{2}F)(\omega ^{t})\,\mathrm{d}s_{i}\ldots \,\mathrm{d}s_{1},
\end{eqnarray}
in which the first term is $F(\omega^t)$ by convention.
\begin{example}
\label{EX2} We retake Example \ref{EX1} with a new approach of computation. Namely, we apply the exponential formula approach to compute $E[W(T)^{3}|\mathcal{F}_{t}]$, $t\le T$.
\end{example}
Let $F=W(T)^{3}$. Observe that for $t\le s_1, s_2\le T$,
\begin{equation}
\label{computeF}
F(\omega ^{t}) =W(t)^{3},~(D_{s_1}^{2}F)(\omega ^{t}) =6W(t)~\mbox{and}~(D_{s_2}^{2}D_{s_1}^2F)(\omega ^{t})=0.
\end{equation}
It follows from (\ref{DysonSeries}) and (\ref{computeF}) that
$$
E[F|\mathcal{F}_{t}]=F(\omega ^{t})+\frac{1}{2}\int_{t}^{T}(D_{s_1}^{2}F)(\omega ^{t})\,\mathrm{d}s_1=W(t)^{3}+3(T-t)W(t).
$$
We will use (\ref{DysonSeries}) for some more analytical calculations, as we show in the next
subsection. The analytical calculations become quickly nontrivial, though,
and, for numerical applications, one may want to develop an automatic tool
that performs symbolic Malliavin differentiation (see the earlier
discussion, on the implementation of the BTE).
\begin{remark} As mentioned in the introduction, there is
another way of proving the exponential formula, by the so-called density
method (the denseness of the exponential functions).
\end{remark} Here we just sketch out
the idea. Let $f\in L^2([0,T])$ and define the exponential
function of $f$ as $\varepsilon (f)=\exp( \int_{0}^{T}f(s)\,\mathrm{d}W(s))$.
Obviously $\varepsilon (f)\in L^2(\Omega)$ and has an exponential formula representation. Plug $F=\varepsilon(f)$ into both sides of (\ref{DysonSeries}). We obtain:
on one hand, since $\{\exp(\int_{0}^{u}f(s)\,\mathrm{d}W(u)-\frac{1}{2}\int_0^uf(s)^2\ud s)\}_{u\ge0}$ is a martingale, the
left hand-side of (\ref{DysonSeries}) is%
\begin{equation*}
E[\varepsilon (f)|\mathcal{F}_{t}]=\exp \Big( \int_{0}^{t}f(s)\,\mathrm{d}%
W(s)+\frac{1}{2}\int_{t}^{T}f(s)^{2}\,\mathrm{d}s\Big).
\end{equation*}%
On the other hand, we have
$$
\varepsilon (f)(\omega ^{t}) =\exp \Big( \int_{0}^{t}f(s)\,\mathrm{d}%
W(s)\Big)~\mbox{and}~D_{s}^{2}\varepsilon (f) =f(s)^{2}\varepsilon (f)\text{ for }s\in
[ 0,T].
$$
Thus the right hand-side of (\ref{DysonSeries}) is equal to
\begin{eqnarray*}
&&\sum_{i=0}^{\infty}\varepsilon (f)(\omega ^{t})\frac{1}{2^{i}i!}%
\int_{[t,T]^{i}}(f(s_{i})\ldots f(s_{1}))^{2}\,\mathrm{d}s_{i}\ldots \,%
\mathrm{d}s_{1}\\
&&=\exp \Big( \int_{0}^{t}f(s)\,\mathrm{d}W(s)\Big)\sum_{i=0}^{\infty} \frac{1}{i!}\Big(
\frac{1}{2}\int_{t}^{T}f(s)^{2}\,\mathrm{d}s\Big)^{i}\\
&&=\exp \Big( \int_{0}^{t}f(s)\,\mathrm{d}W(s)+\frac{1}{2}%
\int_{t}^{T}f(s)^{2}\,\mathrm{d}s\Big)\\
&&=E[\varepsilon (f)|\mathcal{F}_{t}].
\end{eqnarray*}%
Hence (\ref{DysonSeries}) holds for $F=\varepsilon (f)$. For general $F\in L^{2}(\Omega )$, the proof of (\ref{DysonSeries}) can be
completed by using the fact that the linear span of the exponential functions is a dense subset of $L^{2}(\Omega )$.
\section{Solutions of Some Problems by Using Dyson Series}
In this section we apply Dyson series expansion to compute $E[F|\mathcal F_t]$ for 4 different $F$. The first example is a very well-known
example, but it illustrates nicely the computation of Dyson series in case
the random variable $F$ (seen as a functional of Brownian motion) is not
path-dependent. In the second example, the functional $F$ is path-dependent.
The third example is path-dependent again and illustrates the problem of
convergence of the Dyson series. The fourth example shows a new
representation of the price of a discount bond in the Cox-Ingersoll-Ross
model (see \cite{Shreve} for a discussion of the problem).

\subsection{The Heat Kernel}
Consider the random variable
\begin{equation*}
F=f(W(T),T),
\end{equation*}
where $f(x,T)$ is some heat kernel satisfying the following differential equation
\begin{equation*}
\frac{\partial ^{2n}f}{\partial x^{2n}}=(-2)^n\frac{\partial ^{n}f}{\partial
T^{n}}.
\end{equation*}
For $t\le s_1,\ldots,s_i\le T$,
\begin{eqnarray*}
&&(D_{s_i}^2\ldots D_{s_1}^2f(W(T),T))(\omega^t)=\frac{\partial^{2i} f(x,T)}{\partial x^{2i}}\Bigg| _{x=W(T)}(\omega^t)\\
&&=(-2)^i\frac{\partial^i
f(W(T),T)}{\partial T^i}(\omega^t)=(-2)^i\frac{\partial^i f(W(t),T)}{%
\partial T^i}.
\end{eqnarray*}
It follows that
\begin{equation*}
\frac{1}{2^ii!}\int_{[t,T]^i}(D_{s_i}^2\ldots
D_{s_1}^2f(W(T),T))(\omega^t)\, \mathrm{d} s_i\ldots\, \mathrm{d} s_1=\frac{%
(t-T)^i}{i!}\frac{\partial^i f(W(t),T)}{\partial T^i}.
\end{equation*}
Then from (\ref{DysonSeries}) we see that, the Dyson series expansion of $E\left[ F|%
\mathcal{F}_{t}\right]$ coincides with the Taylor expansion of $%
t\mapsto f(x,t)$ around $t=T$, evaluated at $x=W(t)$:
$$
E\left[ F|\mathcal{F}_{t}\right] =\sum_{i=0}^{\infty}\frac{(t-T)^i}{i!}%
\frac{\partial^i f(W(t),T)}{\partial T^i}=f(W(t),t).
$$
As a particular case,
\begin{equation*}
f_\tau:~ (x,T) \longmapsto\frac{1}{\sqrt{\tau -T}}\exp \Big(-\frac{x^{2}}{%
2(\tau -T)}\Big),~\tau>T
\end{equation*}
is such a heat kernel. When taking
\begin{equation*}
F=f_{\tau}(W(T),T)=\frac{1}{\sqrt{\tau -T}}\exp \Big(-\frac{W(T)^{2}}{2(\tau -T)}\Big),
\end{equation*}
we get
\begin{equation*}
E\left[ F|\mathcal{F}_{t}\right]=\frac{1}{\sqrt{\tau -t}}\exp \Big(-\frac{%
W(t)^2}{2(\tau -t)}\Big).
\end{equation*}
\textbf{Observation: }We deliberately took $\tau >T$ so that $%
F$ would be well-defined and infinitely Malliavin differentiable. This example is not new, in the sense that it could have been obtained by
applying $\exp(\frac{1}{2} (T-t) \frac{\partial ^2}{\partial x^2})$, i.e.,
the time-evolution operator of the heat equation, to the function $f_\tau(
x,T)$ (see
\cite{Hunter}, Page 162). This example hints to the fact that our
time-evolution operator is an extension of the time-evolution operators
for the heat kernel, the latter being applicable to path-independent
problems, and the former being applicable to path-dependent problems.

\subsection{The Merton Interest Rate Model}

\noindent Let $F=\exp (-\int_{0}^{T}W(u)\, \mathrm{d} u)$. By It\^o's
formula we have
\begin{equation*}
\int_{0}^{T}W(u)\, \mathrm{d} u=\int_{0}^{T}(T-u)\, \mathrm{d} W(u).
\end{equation*}%
This implies that for $t\le s_1,\ldots,s_i\le T$,
\begin{equation*}
D_{s_i}^2\ldots D_{s_1}^2F=(T-s_i)^2\ldots(T-s_1)^2F.
\end{equation*}
We also observe that \begin{equation*}
F(\omega ^{t}) =\exp
\Big(-\int_{0}^{t}W(u)\, \mathrm{d} u-W(t)(T-t)\Big).
\end{equation*}
Therefore by (\ref{DysonSeries}), the Dyson series expansion of $E\left[ F|%
\mathcal{F}_{t}\right]$ is explicitly given as
$$
E\left[ F|\mathcal{F}_{t}\right] =F(\omega ^{t})\sum_{i=0}^{\infty }\frac{1%
}{i!}\Big( \frac{1}{2}\int_{t}^{T}(T-s)^{2}\, \mathrm{d} s\Big) ^{i}
$$
and it leads to
$$
E\left[ F|\mathcal{F}_{t}\right]=\exp \Big(-\int_{0}^{t}W(u)\, \mathrm{d} u-W(t)(T-t)+\frac{1}{6}%
(T-t)^{3}\Big).
$$

\subsection{Moment Generating Function of Geometric Brownian Motion}

Let $X(T) =e^{M-\sigma W(T)}$ ($M\in\mathbb R$, $\sigma>0$) be a geometric Brownian motion (the Brownian motion is scaled and noncentral) at time $T>0$. Recall that $X(T)$ is in fact a lognormal random variable. Denote by $F=e^{-X(T)}$, then $E[F]$ is the moment generating function of $X(T)$ evaluated at $-1$. In this example we obtain the Dyson series expansion of $E[F|\mathcal{F}_t]$ for $t\le T$ and compare it with the Taylor series expansion at $t=0$.

First, observe that $F$ is an infinitely continuously differentiable function of  $W(T)$. Then by definition of Malliavin derivative operator (see e.g. Definition 1.2.1 in \cite{r8}), for $t\le s_{1},\ldots,s_n\le T$,
$$
D_{s_{n}}^{2}\ldots D_{s_{1}}^{2}F=D_T^{2n}F.
$$
Note that by induction, we can easily show
\begin{equation*}
D_T^{2n}F=F\sigma ^{2n}\sum_{i=0}^{2n}(-1)^{i}%
\begin{Bmatrix}
2n \\
i%
\end{Bmatrix}%
e^{i(M-\sigma W(T))}~\mbox{for}~n\ge0,
\end{equation*}%
where $%
\begin{Bmatrix}
j \\
i%
\end{Bmatrix}%
$ for $i\le j$ denote the Stirling numbers of the second kind, with convention $
\begin{Bmatrix}
0 \\
0%
\end{Bmatrix}%
$ $=1$, and $%
\begin{Bmatrix}
j \\
0%
\end{Bmatrix}%
$ $=0$ for any $j> 0$. Therefore for $t\le T$,%
\begin{equation*}
\frac{1}{2^{n}n!} \int_{[t,T]^n} (D_{s_{n}}^{2}\ldots D_{s_{1}}^{2}F)(\omega
^{t})\, \mathrm{d} s_{n}\ldots\, \mathrm{d} s_{1} =e^{-e^{M-\sigma W(t)}}%
\frac{(T-t) ^{n}\sigma ^{2n}}{2^{n}n!}\sum_{i=0}^{2n}(-1)^{i}%
\begin{Bmatrix}
2n \\
i%
\end{Bmatrix}%
e^{i(M-\sigma W(t))},
\end{equation*}
and by (\ref{DysonSeries}), the Dyson series expansion of $E[F|\mathcal{F}_{t}]$ is given as
\begin{equation}  \label{ourseries}
e^{-e^{M-\sigma W(t)}}\sum_{n=0}^{\infty
}\sum_{i=0}^{2n}\frac{(\frac{\sigma ^{2}(T-t) }{2})^{n}(-1)^{i}}{n!}%
\begin{Bmatrix}
2n \\
i%
\end{Bmatrix}%
e^{i(M-\sigma W(t))}.
\end{equation}
It is known that the Laplace transform of the lognormal distribution does not have closed-form (see \cite{Heyde}) nor convergent series representation (see e.g. \cite{Holgate}). In particular, it is shown in \cite{Tellam} that the corresponding Taylor series is divergent. However divergence does not mean "useless". A number of alternative divergent series can be applied for numerical computation purpose. The study on the approximations of Laplace transforms of lognormal distribution (as well as its characteristic functions) has been long standing and many methods by using divergent series have been developed. For this area we refer to \cite{Barakat,Holgate,Asmussen} and the references therein.  In this example, we obtain the Dyson series (\ref{ourseries}) as a new divergent series representation of the Laplace transform of $X(T)$. Next we compare our Dyson series expansion of $E[F]$ (taking $M=0$, $t=0$ in (\ref{ourseries})):
\begin{equation}
\label{Dyson1}
\sum_{n=0}^{\infty
}\sum_{i=0}^{2n}\frac{(\sigma ^{2}T)^{n}(-1)^{i}}{2^nn!e}%
\begin{Bmatrix}
2n \\
i%
\end{Bmatrix}
\end{equation} to its Taylor series expansion
\begin{equation}
\sum_{n=0}^{\infty }\frac{(-1)^{n}}{n!}e^{\frac{n^{2}\sigma ^{2}T}{2}}
\label{Taylor}
\end{equation}%
by using numerical methods. Namely, in a particular parameter setting $(T,\sigma)=(1,0.6)$, we compute the first 40 partial sums of Dyson series and the first 10 partial sums of Taylor series of $E[F]$. The results are presented below as illustrations and tables.
\begin{figure}[H]
\begin{minipage}[b]{0.45\linewidth}
\centering
\includegraphics[width=\textwidth]{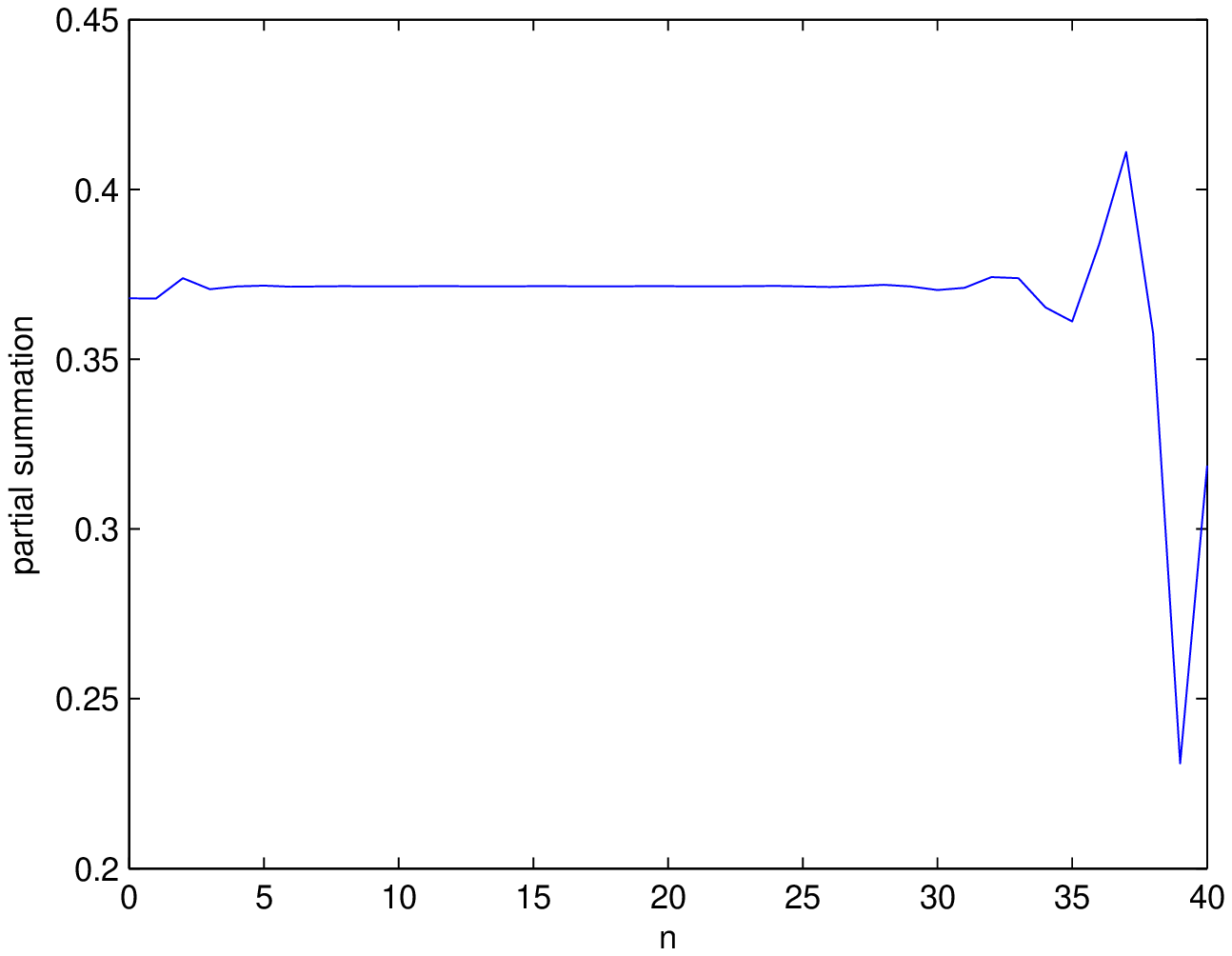}
\caption{The first $40$ partial sums of Dyson series with $(M,T,\sigma)=(0,1,0.6)$.}
\end{minipage}
\hspace{0.6cm}
\begin{minipage}[b]{0.45\linewidth}
\centering
\includegraphics[width=\textwidth]{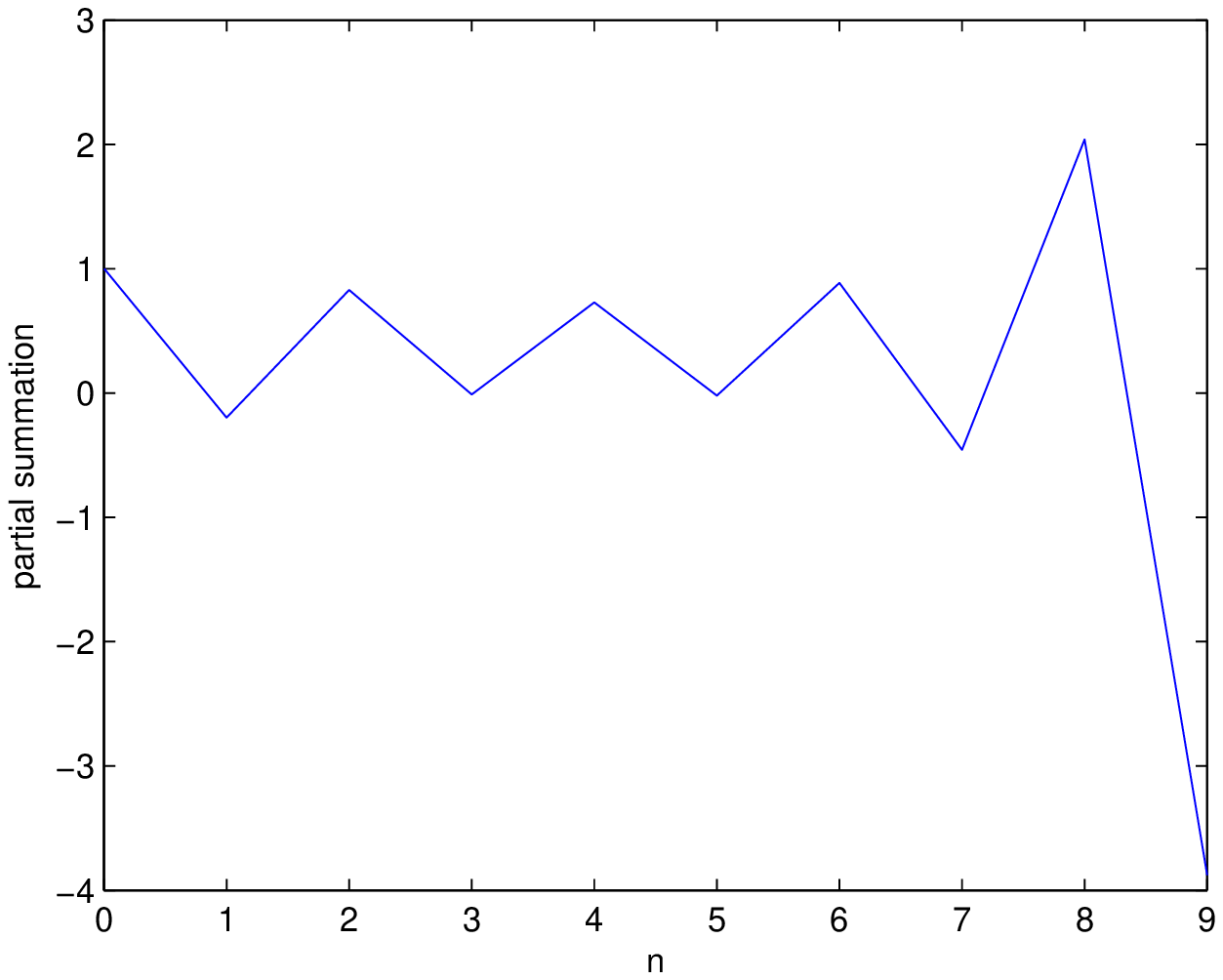}
\caption{The first $10$ partial sums of Taylor series with $(M,T,\sigma)=(0,1,0.6)$.}
\end{minipage}
\end{figure}
\begin{table}[H]
\begin{center}
\begin{tabular}{|c|c|c|}
\hline
$n$& Sum of first $n$ terms (Dyson series) & Sum of first $n$ terms (Taylor series)  \\
 \hline
0&0.3679 & 1  \\ \hline
1&0.3679 & -0.1972  \\ \hline
2&0.3679 & 0.8300  \\ \hline
3&0.3679 & -0.0122  \\ \hline
4&0.3738 & 0.7301  \\ \hline
5&0.3706 & -0.0201  \\ \hline
6&0.3706 & 0.8855  \\ \hline
7&0.3714 & -0.4575  \\ \hline
8&0.3714 & 2.0403  \\ \hline
9&0.3717 & -3.8787  \\ \hline\hline
True value&0.3717 & 0.3717  \\ \hline
\end{tabular}%
\end{center}
\caption{Values of the first 10 partial sums of Dyson series and Taylor series.}
\end{table}
%%%%%%%%%%%%%%%%%%%%%%%%%%%%%%%%%%%%%%%%%%%%%%%%%%%%%%%%
From the results we see that interesting phenomena arise:
\begin{description}
  \item[(1)] The Taylor series (\ref{Taylor}) diverges
much earlier (as $n$ increases) and has overall much larger deviation than the Dyson series (\ref{Dyson1}). From Figures 1-2, we observe that the summation of the first terms of series (\ref{ourseries}) starts to diverge at around $n=35$ and the signal amplitude is less than $0.02$, while the Taylor expansion diverges at $n=8$ with a jump larger than $2$.
  \item[(2)] The first terms of Dyson series are good approximations of the Laplace transform, while the Taylor series' are not. In Table 1, we compare the first 10 partial sums of the series to the "true value" of $E[F]=\exp(e^{-0.6W(1)})$ by Monte Carlo simulation: 0.3717. The latter value is obtained through generating the lognormal variable $2^{20}$ times. We see that the first 10 partial sums of Dyson series have bias less than 0.0038, while the first 10 partial sums of Taylor series are not good estimates at all. Heuristically speaking, Dyson series representations perform much better on approximation because the exponential formula of $E[F|\mathcal F_t]$ is an expansion around $F(\omega^t)$, but the Taylor series is based on an expansion around $0$ (its first term in the series is always $1$).
\end{description}
  In conclusion, numerical experiments show the Dyson series (\ref{ourseries}) provides new and good approximations of the Laplace transforms of lognormal distribution.

Also it is interesting to observe that (\ref{ourseries}) resembles the moment generating function for
a Poisson random variable $N$, which can be expressed as:
\begin{equation*}
E[\exp (zN)]=\sum_{n=0}^{\infty }\sum_{i=0}^{n} \frac{z^n}{n!}
\begin{Bmatrix}
n \\
i%
\end{Bmatrix}
\lambda^i,
\end{equation*}
where $N$ is a Poisson random variable with mean parameter $\lambda$.

\subsection{Bond Pricing in the Extended Cox-Ingersoll-Ross Model}

Assume that the interest rate is given by
\begin{equation}
\label{CIR}
\, \mathrm{d} r(s)=\left( -2b(s)r(s)+d\sigma (s)^{2}\right) \, \mathrm{d}
s+2\sigma (s)\sqrt{r(s)}\, \mathrm{d} W(s),
\end{equation}%
where $r(0)=r_{0}$, $b(s)$ and $\sigma (s)$ are deterministic functions and $d$ is a positive integer.

By It\^o's lemma and L\'evy's theorem (see e.g.\cite{Shreve}) the interest
rate $\{r(s)\}_{s\ge0}$ can be represented as
\begin{equation}  \label{rel}
\{r(s)\}_{s\ge0}=\left\{\sum_{i=1}^{d}X_{i}(s)^2\right\}_{s\ge0},
\end{equation}%
where $X_{i}$ is the Ornstein-Uhlenbeck process defined by
\begin{equation*}
\, \mathrm{d} X_{i}(s)=-b(s)X_{i}(s)\, \mathrm{d} s+\sigma (s)\, \mathrm{d}
W_{i}(s),
\end{equation*}%
with $X_{i}(0)=\sqrt{\frac{r_{0}}{d}}$ for all $i=1,\ldots,d$ and $W_i\in(\Omega,\{\mathcal F_t^i\}_{t\ge0},P)$ being independent standard Brownian motions. Let
\begin{equation*}
F=\exp \Big( -\int_{t}^{T}r(s)\, \mathrm{d} s\Big) .
\end{equation*}%
Our goal is to find the bond price $E[F|\mathcal{F}_{t}]$ for $t\in[0,T]$. Since $F$ can be
written as $F=\prod_{i=1}^{d}F_{i}$ where $F_{i}:=\exp (
-\int_{t}^{T}X_{i}(s)^{2}\, \mathrm{d} s) $, then we can decompose $E[F|\mathcal{F}_{t}]$ into product of independent
conditional expectations:
\begin{equation}
E[F|\mathcal{F}_{t}]=\prod_{i=1}^{d}E[F_{i}|\mathcal{F}_{t}^{i}].
\label{decomCIR}
\end{equation}
Below we compute $E[F_{i}|\mathcal{F}_{t}^{i}]$ for each $i$ to obtain $E[F|\mathcal{F}_{t}]$ by applying (\ref{DysonSeries}). We remark that, for each $E[F_{i}|\mathcal{F}_{t}^{i}]$,  acting the operator $\omega^t$ means to freeze its corresponding $W_i$. Two cases are discussed according to whether $b$ is time-dependent or not.
\begin{description}
\item[Case 1] We first consider the simple case $b\equiv 0$.
\end{description}
From It\^o formula we see
\begin{equation}
X_{i}(s)=X_{i}(0)+\int_{0}^{s}\sigma (u)\, \mathrm{d} W_i (u).
\end{equation}
For each $i=1,\ldots,d$, take $s\in [t,T],u\geq s$. Then by Remark \ref{freezing}, we obtain
\begin{eqnarray}
\label{computefreezing}
&&X_i(s)^2(\omega ^{t}) = (X_i(s)(\omega ^{t}))^2=X_i(t)^2;\nonumber \\
&&(D_{s}X_i(u)^2)(\omega ^{t}) =2\sigma (s)X_i(t);\nonumber \\
&&(D_{s}^{2}X_i(u)^2)(\omega ^{t}) =2\sigma ^{2}(s).
\end{eqnarray}
By (\ref{decomCIR}), (\ref{DysonSeries}) and (\ref{computefreezing}), the first terms of the
Dyson series are explicitly given as
\begin{eqnarray}  \label{CIRcompu2}
&&E[F|\mathcal{F}_{t}]=e^{-(T-t)r(t)}\prod_{i=1}^{d} \bigg\{ 1+\frac{1}{2}%
\int_{t}^{T}\sigma ^{2}(s_{1})\big(4(T-s_{1})^{2}X_{i}(t)^{2}-2(T-s_{1})\big)\,
\mathrm{d} s_{1}  \notag \\
&&~~+\frac{1}{2^{2}}\int_{t}^{T}\int_{s_{1}}^{T}\sigma ^{2}(s_{1})\sigma
^{2}(s_{2}) \Big\{ 16(T-s_{1})^{2}(T-s_{2})^{2}X_{i}(t)^{4}  \notag \\
&&~~-\left( 8(T-s_{1})^{2}(T-s_{2})+40(T-s_{1})(T-s_{2})^{2}\right)
X_{i}(t)^{2}+8(T-s_{2})^{2}  \notag \\
&&~~+4(T-s_{1})(T-s_{2})\Big\}\, \mathrm{d} s_{1}\, \mathrm{d} s_{2}+\ldots%
\bigg \}.
\end{eqnarray}
Let us denote by $A_0,A_1,A_2$ the coefficients of $%
1,X_i(t)^2,X_i(t)^4$ in the expansion (\ref{CIRcompu2}), respectively. Then (\ref{CIRcompu2}) can be represented by
\begin{eqnarray}  \label{CIR_Sixian_calc}
E[F|\mathcal{F}_{t}]=e^{-(T-t)r(t)}
(A_0^d+A_0^{d-1}A_1r(t)+A_0^{d-1}A_2r(t)^2+\ldots).
\end{eqnarray}
To be more explicit we denote $r_{n-m}^{(m)}$ as the remainder of the series expansion of $A_m$ ($n$ corresponds the multiplicity of integral $\int_t^T\ldots\int_{s_{n-1}}^T$ of $r_{n-m}^{(m)}$'s first term) and can write
\begin{eqnarray*}
A_0&=&1-\int\limits_{t}^{T}(T-s_{1})\sigma(s_{1})^2\, \mathrm{d}
s_{1}\\
&&~~+\int_{t}^{T}\int_{s_{1}}^{T}\big(2(T-s_{2})^{2}+(T-s_{1})(T-s_{2})\big)%
\sigma(s_{1})^2\sigma(s_{2})^2\, \mathrm{d} s_{2}\, \mathrm{d}
s_{1}+r_3^{(0)}; \\
A_1&=&\int_t^T 2(T-s_1)^2\sigma(s_1)^2\, \mathrm{d} s_1 \\
&&~~+\int_t^T \int_{s_1}^T
\big(10(T-s_1)(T-s_2)^2-2(T-s_1)^2(T-s_2)\big)\sigma(s_1)^2\sigma(s_2)^2\, \mathrm{d}
s_2\, \mathrm{d} s_1+r_2^{(1)}; \\
A_2&=&\int_t^T \int_{s_1}^T 4(T-s_1)^2(T-s_2)^2\sigma(s_1)^2\sigma(s_2)^2\,
\mathrm{d} s_2\, \mathrm{d} s_1+r_1^{(2)}.
\end{eqnarray*}
\textbf{Remark}: It is worth noting that the explicit forms of $A_0,A_1,A_2...$ remain unknown (it is notoriously difficult to compute the remaining terms $r_{n-m}^{(m)}$ and this subject of calculus is still quite open). As a consequence, it is not sure whether $A_0^d,A_0^{d-1}A_1,A_0^{d-1}A_2$ are the coefficients of $1,r(t),r(t)^2$ in the Taylor expansion of $e^{(T-t)r(t)}E[F|\mathcal F_t]$ around $r(t)$, namely we can not show theoretically $A_0A_n=\frac{1}{n!}A_1^n$ for all $n\geq 2$. However, if we suppose $\sup_{s\in [t,T]}\sigma(s)=\sigma$ exists and $\sigma^2 \tau<\frac{1}{2}$ where $\tau:=T-t$, it is not hard to show by induction that for all positive integer $m$ and $n$, $r^{(m)}_{n-m}$ can be bounded by $c\tau^{m+2n}$, $c>0$ is a constant which does not depend on $m$ and $n$. Moreover, we can interestingly check our coefficients of first terms with some known results. For example, in the particular case $\sigma\equiv1$, $d=1$, there is
an existing analytical formula (see \cite{Shreve} again):
\begin{equation}
E[F|\mathcal{F}_{t}]=(\text{sech}( \sqrt{2}\tau )) ^{%
\frac{1}{2}}e^{-\frac{r(t)}{\sqrt{2}}\tanh \sqrt{2}\tau },  \label{formula}
\end{equation}
where $\text{sech}(\cdot)$, $\tanh(\cdot)$ denote  hyperbolic secant and hyperbolic tangent respectively. Applying Taylor expansion around $\tau=0$ in (\ref{formula}) leads to
\begin{eqnarray}  \label{CIRbond}
&&E[F|\mathcal{F}_{t}]=e^{-(T-t)r(t)} \bigg\{ \big( 1-\frac{1}{2}\tau ^{2}+\frac{7}{24}\tau
^{4}+O(\tau^6)\big)\nonumber\\
&&~~ +r(t)\big( \frac{2}{3}\tau ^{3}-\frac{8}{15}\tau
^{5}+O(\tau^7)\big) +r(t)^{2}\big( \frac{2}{9}\tau ^{6}+O(\tau^8)\big)
+\ldots\bigg\},
\end{eqnarray}
By taking $\sigma\equiv1$, $d=1$ in (\ref{CIRcompu2}), we see that our first terms agree with those in (\ref{CIRbond}).
\begin{description}
\item[Case 2] We now consider a more general case, where $b$ is a non-zero deterministic function.
\end{description}
Again, the problem of obtaining the general term in the series (\ref{DysonSeries}) for $E[F|\mathcal F_t]$ is still open. Instead we compute the first 2 terms of its Taylor expansion around $r(t)$. Denote by $\tilde b(s):=\int_0^s b(u)\mathrm{d}u$, then by It\^o formula, for each $i=1,\ldots,d$,
\begin{equation*}
X_{i}(s)=X_{i}(0)e^{-\tilde b(s)}+e^{-\tilde b(s)}\int_{0}^{s}e^{\tilde b(v)}\sigma (v)\, \mathrm{d}
W_{i}(v).
\end{equation*}%

By using a similar computation as in (\ref{CIR_Sixian_calc}),
\begin{equation}  \label{Dysongeneral}
E[F|\mathcal{F}_{t}]=\exp \Big( \Big(-\int_t^T e^{2\tilde b(t)-2\tilde b(u)}\mathrm{d} u\Big)  r(t)\Big) (A_0(\tilde b)^d+A_0(\tilde b)^{d-1}A_1(\tilde b)r(t)+\ldots),
\end{equation}
with
\begin{eqnarray*}
A_0(\tilde b)&=&1-\int_t^T\int_s^T e^{2\tilde b(s)-2\tilde b(u)}\sigma (s)^{2}\, \mathrm{d} u \mathrm{d} s+\ldots; \\
A_1(\tilde b)&=&2e^{2\tilde b(t)}\int_t^T\Big( \int_s^T e^{-2\tilde b(u)}\mathrm{d} u\Big) ^{2}e^{2\tilde b(s)}\sigma(s)^2 \, \mathrm{d} s+\ldots.
\end{eqnarray*}
Now let's introduce an application of (\ref{Dysongeneral}) to some pricing problem.
%but first we have to suppose that $A_0(\tilde b)^d$ and $A_0(\tilde b)^{d-1}A_1(\tilde b)$ are the 2 %coefficients in the Taylor expansion around $r(t)$, namely,
%\begin{equation*}
%A_n(\tilde b)A_0(\tilde b)=\frac{A_1^n}{n!}~\mbox{for}~n\ge2.
%\end{equation*}
Recall that (see \cite{Shreve}) the bond price is
affine with respect to $r(t)$ and satisfies
\begin{equation}
E[F|\mathcal{F}_{t}]=\exp (-C(t,T)r(t)-A(t,T)),  \label{generalcase}
\end{equation}
where $C(t,T)$ solves the time-dependent Riccati equation below:
\begin{equation}
\frac{\partial C(t,T)}{\partial t}=2b(t)C(t,T)+2\sigma (t)^{2}C(t,T)^2-1,  \label{Rica}
\end{equation}%
and $A$ satisfies $\frac{\partial A(t,T)}{\partial t}=-d\sigma(t)^2C(t,T)$. By (\ref{Dysongeneral}) and the Taylor expansion of the right-hand side of (\ref{generalcase}), we have:
\begin{eqnarray*}
&&A_0(\tilde b)^d+A_0(\tilde b)^{d-1}A_1(\tilde b)r(t)+\ldots \\
&&=\exp \Big( \Big(\int_t^T e^{2\tilde b(t)-2\tilde b(u)}\mathrm{d} u-C(t,T)\Big)  r(t)-A(t,T)\Big) \\
&&=e^{-A(t,T)}\Big(1+\Big( \int_t^T e^{2\tilde b(t)-2\tilde b(u)}\mathrm{d} u-C(t,T)\Big)  r(t)+\ldots\Big).
\end{eqnarray*}
The above equation allows us to obtain the solution of the Riccati equation (\ref{Rica}) as
\begin{equation*}
C(t,T)=-\frac{A_1(\tilde b)}{A_0(\tilde b)}+\int_t^T e^{2\tilde b(t)-2\tilde b(u)}\mathrm{d} u.
\end{equation*}
In the meanwhile
\begin{equation*}
A(t,T)=-d\log{A_0(\tilde b)}.
\end{equation*}
%%%%%%%%%%%%%%%%%%%%%%%%%%%%%%%%%%%%%%%%%%%%%%%%%%%%%
%%%%%%%%%%%%%%%%%%%%%%%%%%%%%%%%%%%%%%%

\section{Conclusion and Future Work}

For future work, we intend to design and analyze new numerical schemes that
implement the Dyson series to solve BSDEs. The main weakness of Theorem \ref%
{ExpFormula} is that it currently requires the functional $F$ to be
infinitely Malliavin differentiable. It is unknown at this point whether
this smoothness requirement can be relaxed. Theorem \ref{ExpFormula} can certainly be
extended to a filtration generated by several Brownian motions, and probably
to L\'evy processes. A generalization from representation of martingales to
representation of semi-martingales would also be interesting.

\section{Appendix}

\subsection{Proof of Theorem \ref{BTE}}
Fix $m\in\{0,1,\ldots,M-1\}$, we denote by $t=m\Delta$ and $T=(m+1)\Delta$ for simplifying notations. We remind the reader of
Proposition 1.2.4 in \cite{r8}, namely, if $F\in \mathbb{D}^{1,2}$ \footnote{The definition of this space is given on Page 26 in \cite{r8}. It is sufficient to note that in this work $F$ belongs to $\mathbb D_{\infty}$, a subspace of $\mathbb D^{1,2}$.}, then for $t\leq s$,
\begin{equation}
\label{krompir2}
D_{t}(E[F|\mathcal{F}_{s}])=E[D_{t}F|\mathcal{F}_{s}].
\end{equation}
Using (\ref{krompir2})\ and the Clark-Ocone formula (see, e.g. Proposition 1.3.5 in \cite{r8}),
we get, for any integer $l\ge0$:
\begin{eqnarray}
E[D_{T}^{l}F|\mathcal{F}_{t}]
&=&E[D_{T}^{l}F]+\int_{0}^{t}E[D_{s}E[D_{T}^{l}F|\mathcal{F}_{t}]|\mathcal{F}%
_{s}]\, \mathrm{d} W(s)  \notag \\
&=&E[D_{T}^{l}F]+\int_{0}^{t}E[D_{s}D_{T}^{l}F|\mathcal{F}_{s}]\, \mathrm{d}
W(s)  \notag \\
&=&E[D_{T}^{l}F]+\int_{0}^{T}E[D_{s}D_{T}^{l}F|\mathcal{F}_{s}]\, \mathrm{d}
W(s)  \notag \\
&&-\int_{t}^{T}E[D_{s}D_{T}^{l}F|\mathcal{F}_{s}]\, \mathrm{d} W(s)  \notag
\\
&=&E[D_{T}^{l}F|\mathcal{F}_{T}]-\int_{t}^{T}E[D_{s}D_{T}^{l}F|\mathcal{F}%
_{s}]\, \mathrm{d} W(s).  \label{trucat}
\end{eqnarray}
Since $F$ is assumed to be cylindrical, then by the definition of Malliavin derivative operator,  we have
\begin{equation}
\label{const}
D_{s}D_{T}^{l}F=D_{T}^{l+1}F,~\mbox{for}~s\in (t,T].
\end{equation}
It results from (\ref{trucat}) and (\ref{const}) that
\begin{equation*}  \label{tugudu}
E[D_{T}^{l}F|\mathcal{F}_{t}]=E[D_{T}^{l}F|\mathcal{F}_{T}]-
\int_{t}^{T}E[D_{T}^{l+1}F|\mathcal{F}_{s}]\, \mathrm{d} W(s).
\end{equation*}
%So $\lim_{n\rightarrow \infty }E[X^{2}\left( t,\frac{1}{n^2}\right)] =0$ and we
%can choose a subsequence $\left\{ n_{k}\right\} \subset \left\{ n\right\} ,$
%such that $E[\lim_{n_{k}\rightarrow \infty }X^{2}\left( t,\frac{1}{n_{k}}%
%\right)] =0$ and $n_{k}$ tends to infinity with an order higher that $2$,
%Then by following Chebyshev inequality and the Borel-Cantelli lemma we get (%
%\ref{ase}).
We thus obtain
\begin{eqnarray*}
E[F|\mathcal{F}_{t}]&=&E[F|\mathcal{F}_{T}]-\int_{t}^{T}E[D_{T}F|\mathcal{F}%
_{s_{1}}]\, \mathrm{d} W(s_{1}) \\
&=&E[F|\mathcal{F}_{T}]-\int_{t}^{T}E[D_{T}F|\mathcal{F}_{T}]\delta
W(s_{1})\nonumber\\
&&+\int_{t}^{T}\int_{s_{1}}^{T}E[D_{T}^{2}F|\mathcal{F}_{s_{2}}]\delta
W(s_{2})\delta W(s_{1}).
\end{eqnarray*}%
We continue the expansion
iteratively:
\begin{eqnarray}
\label{DBTE1}
&&E[F|\mathcal{F}_{t}]=E[F|\mathcal{F}_{T}]-\int_{t}^{T}E[D_{T}F|\mathcal{F}%
_{T}]\delta W(s_{1})+\ldots  \nonumber \\
&&~~+(-1)^{L-1}\int_{t}^{T}\int_{s_{1}}^{T}\ldots%
\int_{s_{L-2}}^{T}E[D_{T}^{L-1}F|\mathcal{F}_{T}]\delta W(s_{L-1})\ldots\delta
W(s_{1})+R_{[t,T]}^{L},
\end{eqnarray}
where the remainder $R_{[t,T]}^{L}$ is given as
\begin{equation*}
R_{[t,T]}^{L}:=(-1)^L\int_{t}^{T}\int_{s_{1}}^{T}\ldots%
\int_{s_{L-1}}^{T}E[D_{T}^{L}F|\mathcal{F}_{s_{L}}]\delta
W(s_{L})\ldots\delta W(s_{1}).  \label{bound}
\end{equation*}
Note that the partial sum (\ref{DBTE1}) converges in $L^2(\Omega)$ as $L\rightarrow \infty$ is equivalent to $\|R_{[t,T]}^{L}\|_{L^2(\Omega)}\rightarrow0$ as $L\rightarrow\infty$. We then introduce two useful lemmas.
For simplifying notations, we denote the multiple Skorohod integral of a
 stochastic process $\{X(s)\}_{s\in\mathbb R^L}$ by $\delta^L(X)$:
\begin{equation*}
\delta ^{L}(X):=\int_{\mathbb{R}^{L}}X(s_{1},\ldots,s_{L})\left( \delta
W(s)\right) ^{\otimes L},
\end{equation*}%
where $\left( \delta W(s)\right) ^{\otimes L}:=\delta W(s_{L})\ldots \delta
W(s_{1})$ and similarly we denote $(\, \mathrm{d} s)^{\otimes {L}%
}:=\, \mathrm{d} s_{L}\ldots \, \mathrm{d} s_{1}$ in the sequel. The following lemma represents $E[\delta^L(X)^2]$.
\begin{lemma}
\label{Edel} For a stochastic process $X$ indexed by $\mathbb R^L$, its multiple Skorohod integral $\delta^L(X)$ is well-defined if $E[\delta ^{L}(X)^{2}]<\infty$. Moreover, we have
\begin{eqnarray*}
&&E[\delta ^{L}(X)^{2}]=\sum_{i=0}^{L}{\binom{L}{i}}^{2}i!E\Big[\int_{\mathbb R^i}\Big(\int_{\mathbb{R}%
^{2L-2i}}D_{x_{1},\ldots,x_{L-i}}^{L-i}X(s_{1},\ldots
,s_{L-i},s_{L-i+1},\ldots ,s_{L}) \\
&&~~\times D_{s_{1},\ldots ,s_{L-i}}^{L-i}X(x_{1},\ldots
,x_{L-i},s_{L-i+1},\ldots ,s_{L}) (\mathrm{d}x)^{\otimes (L-i)}(\mathrm{d}s)^{\otimes (L-i)}\Big)\mathrm{%
d}s_{L-i+1}\ldots\mathrm{d}s_{L}\Big].
\end{eqnarray*}
\end{lemma}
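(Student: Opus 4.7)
The plan is to derive the identity by combining two classical tools of Malliavin calculus: the multiple duality between Skorohod integration and Malliavin differentiation, and the iterated commutation relation $D_s\delta(u)=u(s)+\delta(D_s u)$. This is the same template used for analogous variance identities in Nualart \cite{r8}.

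First, I would record the multiple duality formula obtained by iterating the one-step duality $E[F\delta(u)]=E[\int u(t)D_t F\,dt]$: for a smooth test random variable $G$,
\begin{equation*}
E\big[G\cdot\delta^L(X)\big] = E\Big[\int_{\mathbb R^L} D^L_{s_1,\ldots,s_L}G\cdot X(s_1,\ldots,s_L)\,(ds)^{\otimes L}\Big].
\end{equation*}
Second, I would establish the commutation identity between $D^L$ and $\delta^L$ by induction on $L$. At each application of $D_t$ to a Skorohod integral, either $t$ is paired with one of the integration variables (producing an evaluation of the integrand at $t$ and lowering the Skorohod order by one), or it commutes through (producing a new Skorohod integrand $D_t u$). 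Iterating $L$ times and collecting symmetric terms yields a sum indexed by $i\in\{0,\ldots,L\}$, where $i$ counts the number of derivative indices that pass through to the surviving $\delta^{L-i}$ and $L-i$ is the number of contractions; the combinatorial weight $\binom{L}{i}^2 i!$ arises from choosing which $L-i$ of the $t_j$'s are contracted, which $L-i$ of the Skorohod integration variables they pair with, and ordering these pairings (after exploiting the symmetry of $D^{L-i}$ in its indices).

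Third, I would set $G=\delta^L(X)$ in the multiple duality formula and substitute the commutation expansion for $D^L_{s_1,\ldots,s_L}\delta^L(X)$. Each resulting term contains a $\delta^{L-i}$ that is paired against $X(s_1,\ldots,s_L)$ through the outer integral; applying the duality formula once more transfers this $\delta^{L-i}$ onto an additional $(L-i)$-fold Malliavin derivative acting on the \emph{other} copy of $X$. After relabeling, the $2(L-i)$ contracted variables become the integration variables $x_1,\ldots,x_{L-i}$ and $s_1,\ldots,s_{L-i}$ over $\mathbb R^{2L-2i}$, while the $i$ pass-through variables become the shared arguments $s_{L-i+1},\ldots,s_L$ integrated over $\mathbb R^i$; this reproduces precisely the stated identity. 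The main obstacle is the combinatorial bookkeeping in the commutation step: one has to verify that the multiplicity of each summand is exactly $\binom{L}{i}^2 i!$ and not off by a permutation factor, and one must also check that the integrability hypothesis $E[\delta^L(X)^2]<\infty$ propagates through the intermediate applications of duality and Fubini so that every interchange is legitimate.
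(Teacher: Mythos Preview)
The paper does not actually supply a proof of this lemma: immediately after stating it, the authors write that the identity is (2.12) in Nourdin--Nualart \cite{Ivan}, that no proof is given there, and that their own proof is ``available upon request.'' So there is no in-text argument to compare against. Your strategy---iterate the duality $E[G\,\delta(u)]=E[\int D_tG\,u(t)\,dt]$, expand $D^L\delta^L(X)$ via the commutation rule $D_t\delta(u)=u(t)+\delta(D_tu)$, and then apply duality once more to eliminate the residual multiple Skorohod integral---is exactly the standard route (and is how such identities are derived in \cite{r8} and \cite{Ivan}); it is correct and would succeed.

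One point to tighten: your verbal account of the combinatorics swaps the roles of $i$ and $L-i$. In the commutation expansion of $D^L_{t_1,\ldots,t_L}\delta^L(X)$, the coefficient $\binom{L}{j}^2 j!$ multiplies the term with \emph{$j$ contractions} (i.e.\ $j$ of the $t$'s hit Skorohod variables), leaving $\delta^{L-j}$ and $D^{L-j}$ acting on the integrand. When you match this against the lemma's display, the index $i$ there is the number of \emph{shared} arguments $s_{L-i+1},\ldots,s_L$, which equals the number of contractions; hence $i=j$, and the $L-i$ derivatives on each copy come from the pass-through derivatives plus the second duality. Your sentence ``$L-i$ is the number of contractions'' and the count ``choosing which $L-i$ of the $t_j$'s are contracted \ldots\ and ordering these pairings'' would produce $\binom{L}{i}^2(L-i)!$, not $\binom{L}{i}^2 i!$. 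This is precisely the bookkeeping hazard you flag at the end; once you fix the labeling, the argument goes through cleanly.
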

This lemma is given by (2.12) in \cite{Ivan}, where the proof is not provided. We came up with a proof, which is
available upon request.
\begin{lemma}
\label{Remainder} We have
\begin{equation*}
E\left[( R_{[t,T]}^{L}) ^{2}\right]\leq \sum_{i=0}^{L}E\left[
( D_{T}^{2L-i}F) ^{2}\right] {\binom{L}{i}}^{4}\frac{i!}{\left(
L!\right) ^{2}}(T-t)^{2L-i}\xrightarrow[L\rightarrow \infty]{}0.  \label{ERL}
\end{equation*}
\end{lemma}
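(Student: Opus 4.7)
The strategy is to represent $R_{[t,T]}^{L}$ as a multiple Skorohod integral of a specific process, apply Lemma \ref{Edel} to write $E[(R_{[t,T]}^L)^2]$ as a sum over $i$, and then use the cylindrical structure of $F$ to identify every Malliavin derivative appearing in the formula with $E[D_T^{2L-i}F|\mathcal{F}_{s_L}]$. The claimed bound then follows by Jensen's inequality plus a simplex-volume computation, and the convergence to zero is immediate from assumption (\ref{Condi}).

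Concretely, I would first recognize the iterated Skorohod integral defining $R_{[t,T]}^{L}$ as $(-1)^{L}\delta^{L}(X)$ (up to the symmetrization factor arising from passing from the time-ordered iterated integral to $\delta^{L}$ on the full cube) for the process
\[
X(s_1,\ldots,s_L)=E[D_T^{L}F\,|\,\mathcal{F}_{s_L}]\,\chi_{\{t\le s_1\le\cdots\le s_L\le T\}}.
\]
Applying Lemma \ref{Edel} then expresses $E[(R_{[t,T]}^L)^2]$ as $\sum_{i=0}^{L}\binom{L}{i}^{2}i!$ times an expectation of an inner integral. Since the indicator in $X$ is deterministic, the Malliavin derivatives in each term only act on the conditional expectation factor. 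By iterating (\ref{krompir2}) I get $D^{L-i}_{x_1,\ldots,x_{L-i}}E[D_T^{L}F|\mathcal{F}_{s_L}]=E[D^{L-i}_{x_1,\ldots,x_{L-i}}D_T^{L}F|\mathcal{F}_{s_L}]$ on $\{x_j\le s_L\}$; and since the indicators in both factors of the Lemma \ref{Edel} integrand force $x_j,s_j\in(t,T]$, property (\ref{const}) applies and collapses this to $E[D_T^{2L-i}F|\mathcal{F}_{s_L}]$. The same identification works for $D^{L-i}_{s_1,\ldots,s_{L-i}}X$, so the product inside the expectation reduces to $(E[D_T^{2L-i}F|\mathcal{F}_{s_L}])^{2}$ multiplied by indicator functions of a simplex-shaped region.

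Then I would apply the Jensen inequality $E[(E[Y|\mathcal{F}])^{2}]\le E[Y^{2}]$ to pull the factor $\|D_T^{2L-i}F\|_{L^2(\Omega)}^{2}$ out of the expectation. What remains is a deterministic integral over the region
\[
\{t\le s_1\le\cdots\le s_L\le T,\ t\le x_1\le\cdots\le x_{L-i}\le s_{L-i+1}\},
\]
whose volume I would compute by conditioning on $u=s_{L-i+1}$ and using a Beta-function evaluation, giving a factor proportional to $(T-t)^{2L-i}$ times a combinatorial constant. Combining this volume with the prefactor $\binom{L}{i}^{2}i!$ from Lemma \ref{Edel} and the $1/L!$-type symmetrization arising from writing the time-ordered iterated Skorohod integral as $\delta^{L}$, and rearranging binomials and factorials, produces the asserted coefficient $\binom{L}{i}^{4}\,i!/(L!)^{2}$. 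With $T-t=\Delta$, the resulting upper bound on $E[(R_{[t,T]}^L)^{2}]$ is exactly the left-hand side of hypothesis (\ref{Condi}), so it tends to zero as $L\to\infty$.

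The hard part will be the precise combinatorial bookkeeping: matching the claimed constant $\binom{L}{i}^{4}i!/(L!)^{2}$ requires carefully tracking (a) the symmetrization factor when identifying an iterated Skorohod integral on the simplex with $\delta^{L}$ on the cube, (b) the Beta-function evaluation of the simplex volume $\int_{t}^{T}(u-t)^{2(L-i)}(T-u)^{i-1}\,\mathrm{d}u$, and (c) the boundary case $i=0$, where the two Malliavin-differentiation constraints $x_j\le s_L$ and $s_j\le x_L$ force a lower-dimensional locus that must be handled separately. All the analytic ingredients, however, reduce to Lemma \ref{Edel}, property (\ref{const}), Jensen's inequality, and a Beta-function integral, with no further smoothness hypotheses beyond (\ref{Condi}).
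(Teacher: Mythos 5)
Your plan is essentially the paper's own proof: write $R_{[t,T]}^{L}$ as a multiple Skorohod integral of the simplex-supported kernel (the paper uses its explicit symmetrization $H_L$ with the $1/L!$ factor), apply Lemma \ref{Edel}, collapse all Malliavin derivatives to $E[D_T^{2L-i}F|\mathcal{F}_{\cdot}]$ via (\ref{krompir2}) and (\ref{const}), bound the conditional expectations by $E[(D_T^{2L-i}F)^2]$, estimate the remaining deterministic integral to produce the coefficient $\binom{L}{i}^4 i!/(L!)^2(T-t)^{2L-i}$, and conclude by Condition (\ref{Condi}). The only deviations are the bookkeeping you yourself defer: the paper applies Lemma \ref{Edel} to the symmetrized kernel, so the integrand is a sum over pairs of permutations (handled by Cauchy--Schwarz together with $E[(E[\cdot|\mathcal F_s])^2]\le E[(\cdot)^2]$, the permutation count supplying the binomial factors), and it bounds the ordered region by relaxing the joint ordering as in (\ref{bound-f}) rather than by an exact Beta-function evaluation; either route yields a constant no larger than the stated one, so your outline is sound.
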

\begin{proof}
The proof is based on finding a fine upper bound of the remainder $R_{[t,T]}^{L}$'s $L^2(\Omega)$ norm. We first observe that
\begin{equation}
\label{DefR}
R_{[t,T]}^{L}=\frac{(-1)^{L}}{L!}\int_{\mathbb{R}^{L}}H_{L}(s_{1},\ldots,s_{L})%
\left( \delta W(s)\right) ^{\otimes L},
\end{equation}%
where
\begin{equation}
H_{L}(s_{1},\ldots,s_{L}):=\sum_{\sigma \in S_{L}}E[D_{T}^{L}F|\mathcal{F}%
_{s_{\sigma (L)}}]\chi _{[t\leq s_{\sigma (1)}\leq \ldots\leq s_{\sigma (L)}\leq
T]}(s_{1},\ldots,s_{L})  \label{defH}
\end{equation}%
is a symmetric function with $S_{L}$ being the collection of all permutations on $\{1,\ldots,L\}$. Then according to
Lemma \ref{Edel} and Fubini's theorem, we obtain
\begin{eqnarray}
 \label{Edelta}
&&E[\delta ^{L}(H_{L})^{2}] \notag \\
&&=\sum_{i=0}^{L}{\binom{L}{i}}^{2}i!E\Big[\int_{\mathbb R^i}\Big(\int_{\mathbb{R}^{2L-2i}}
D_{x_{1},\ldots,x_{L-i}}^{L-i}H_{L}(s_{1},\ldots,s_{L})  \notag \\
&&~~\times
D_{s_{1},\ldots,s_{L-i}}^{L-i}H_{L}(r_{1},\ldots,r_{L})
(\,%
\mathrm{d}x)^{\otimes (L-i)}(\mathrm{d}s)^{\otimes (L-i)}\Big)\mathrm{d}s_{L-i+1}\ldots\mathrm{d}s_{L}\Big],
\end{eqnarray}%
with $r_{l}:=x_{l}\chi _{\{l\leq L-i\}}+s_{l}\chi _{\{l>L-i\}}$, for $l=1,\ldots,L$. By definition of $H_L$ in (\ref{defH}),
\begin{eqnarray*}
&&D_{x_{1},\ldots,x_{L-i}}^{L-i}H_{L}(s_{1},\ldots,s_{L}) \\
&&=\sum_{\sigma \in S_{L}}E[D_{x_{1},\ldots,x_{L-i}}^{L-i}D_{T}^{L}F|\mathcal{F}%
_{s_{\sigma (L)}}]\chi _{[ t,s_{\sigma (L)}]^{L-i}}(x_{1},\ldots,x_{L-i})\chi
_{\{t\leq s_{\sigma (1)}\leq\ldots\leq s_{\sigma (L)}\leq T\}}(s_{1},\ldots,s_{L}).
\end{eqnarray*}
Similarly, we also have
\begin{eqnarray}
\label{DH}
&&D_{s_{1},\ldots,s_{L-i}}^{L-i}H_{L}(r_{1},\ldots,r_{L})\nonumber \\
&&=\sum_{\sigma ^{\prime }\in S_{L}}E[D_{s_{1},\ldots,s_{L-i}}^{L-i}D_{T}^{L}F|%
\mathcal{F}_{r_{\sigma' (L)}}]\chi _{[ t,r_{\sigma'
(L)}]^{L-i}}(s_{1},\ldots,s_{L-i})\chi _{\{t\leq r_{\sigma ^{\prime }(1)}\leq \ldots\leq
r_{\sigma ^{\prime }(L)}\leq T\}}(r_{1},\ldots,r_{L}).\nonumber\\
\end{eqnarray}
 It follows from (\ref{DH}), Cauchy-Schwarz inequality, the fact that $F$ is cylindrical and the inequality $E[(E[F|\mathcal F_t])^2]\le E[F^2]$ that
\begin{eqnarray}
\label{ED}
&&E\Big[
D_{x_{1},\ldots,x_{L-i}}^{L-i}H_{L}(s_{1},\ldots,s_{L})D_{s_{1},\ldots,s_{L-i}}^{L-i}H_{L}(r_{1},\ldots,r_{L})%
\Big]   \notag   \\
&&=E\Big[ \sum_{\sigma \in S_{L}}\sum_{\sigma ^{\prime }\in S_{L}}E[D_{x_{1},\ldots,x_{L-i}}^{L-i}D_{T}^{L}F|\mathcal{F}_{s_{\sigma
(L)}}]E[D_{s_{1},\ldots,s_{L-i}}^{L-i}D_{T}^{L}F|\mathcal{F}_{r_{\sigma
^{\prime }(L)}}]\Big]   \notag \\
&&~~\times\chi _{[ t,s_{\sigma (L)}]^{L-i}}(x_{1},\ldots,x_{L-i})\chi _{\{t\leq
s_{\sigma (1)}\leq \ldots\leq s_{\sigma (L)}\leq T\}}(s_{1},\ldots,s_{L})  \notag
\\
&&~~\times\chi _{[t,r_{\sigma ^{\prime }(L)}]^{L-i}}(s_{1},\ldots,s_{L-i})\chi
_{\{t\leq r_{\sigma ^{\prime }(1)}\leq \ldots\leq r_{\sigma ^{\prime }(L)}\leq
T\}}(r_{1},\ldots,r_{L})  \notag \\
&&\le E\left[ ( D_{T}^{2L-i}F) ^{2}\right]f(x_{1},\ldots,x_{L-i},s_{1},\ldots,s_{L}),
\end{eqnarray}%
where the function
\begin{eqnarray*}
&&f(x_{1},\ldots,x_{L-i},s_{1},\ldots,s_{L}):=\sum_{\sigma \in S_{L}}\sum_{\sigma ^{\prime }\in S_{L}}\chi _{[t,T]^{2L-2i}}(x_{1},\ldots,x_{L-i},s_{1},\ldots,s_{L-i}) \\
&&~~\times\chi _{\{t\leq s_{\sigma (1)}\leq \ldots\leq s_{\sigma (L)}\leq
T\}}(s_{1},\ldots,s_{L})\chi _{\{t\leq r_{\sigma ^{\prime }(1)}\leq\ldots\leq
r_{\sigma ^{\prime }(L)}\leq T\}}(r_{1},\ldots,r_{L})
\end{eqnarray*}%
is symmetric among each of the three groups of variables: $\{x_{1},\ldots,x_{L-i}\},%
\{s_{1},\ldots,s_{L-i}\}$ and $\{s_{L-i+1},\ldots,s_{L}\}$. We thus obtain%
\begin{eqnarray}
\label{f}
&&\int_{[t,T]^{2L-i}}f(x_{1},\ldots,x_{L-i},s_{1},\ldots,s_{L})(\,%
\mathrm{d}x)^{\otimes (L-i)}(\mathrm{d}s)^{\otimes L} \nonumber\\
&&=\left( (L-i)!\right)
^{2}i!\int_{\mathcal D}f(x_{1},\ldots,x_{L-i},s_{1},\ldots,s_{L})(\,%
\mathrm{d}x)^{\otimes (L-i)}(\mathrm{d}s)^{\otimes L},
\end{eqnarray}%
with $\mathcal D=\{(x_1,\ldots,x_{L-i},s_{1},\ldots,s_L)\in[t,T]^{2L-i}:x_{1}\leq \ldots\leq x_{L-i};\
s_{1}\leq\ldots\leq s_{L-i};\ s_{L-i+1}\leq \ldots\leq s_{L}\}$. Recall that by
basic calculation the following property holds:
\begin{equation}
\label{bound-f}
\int_{t\leq x_{1}\leq \ldots\leq x_{n}\leq T}(\,\mathrm{d}x)^{\otimes n}\leq
\int_{t\leq x_{1}\leq \ldots\leq x_{n-i}\leq T,t\leq
x_{n-i+1}\leq \ldots\leq x_{n}\leq T}(\,\mathrm{d}x)^{\otimes n}.
\end{equation}%
It results from (\ref{Edelta}), (\ref{ED}), (\ref{f}) and (\ref{bound-f}) that
\begin{eqnarray}
\label{Fin}
&&E[\delta ^{L}(H_{L})^{2}] \le\sum_{i=0}^{n}E\left[ ( D_{T}^{2L-i}F) ^{2}\right] {\binom{%
L}{i}}^{2}i! \nonumber\\
&&~~\times
\int_{[t,T]^{2L-i}}f(x_{1},\ldots,x_{L-i},s_{1},\ldots,s_{L})(\,%
\mathrm{d}x)^{\otimes (L-i)}(\mathrm{d}s)^{\otimes L}\nonumber\\
&&\le\sum_{i=0}^{L}E\left[ ( D_{T}^{2L-i}F) ^{2}\right] {\binom{%
L}{i}}^{2}i!\left( (L-i)!\right) ^{2}i!\Big( \frac{L!}{i!(L-i)!}\Big) ^{2}
\nonumber\\
&&~~\times\int_{t\leq x_{1}\leq \ldots\leq x_{L-i}\leq T,\ t\leq s_{1}\leq \ldots\leq
s_{L-i}\leq T,\ t\leq s_{L-i+1}\leq \ldots\leq s_{L}\leq T}\ (\,\mathrm{d}%
x)^{\otimes (L-i)}(\mathrm{d}s)^{\otimes L} \nonumber\\
&&=\sum_{i=0}^{L}E\left[ ( D_{T}^{2L-i}F) ^{2}\right] {\binom{L}{i%
}}^{4}i!(T-t)^{2L-i}.
\end{eqnarray}%
Finally, combining (\ref{DefR}), (\ref{Fin}) and  Condition (\ref{Condi}) implies
\begin{equation*}
\label{RL}
E\left[( R_{[t,T]}^{L}) ^{2}\right]\leq \sum_{i=0}^{L}E\left[ (
D_{T}^{2L-i}F) ^{2}\right] {\binom{L}{i}}^{4}\frac{i!}{\left(
L!\right) ^{2}}(T-t)^{2L-i}\xrightarrow[L\rightarrow \infty]{}0.
\end{equation*}
Lemma \ref{Remainder} is proved.
\end{proof}
In view of (\ref{DBTE1}) and Lemma \ref{Remainder}, we claim that
 \begin{equation}
\label{DBTE2}
E[F|\mathcal{F}_{t}]=\sum_{l=0}^{\infty}(-1)^{l}\int_{t}^{T}\int_{s_{1}}^{T}\ldots%
\int_{s_{l-1}}^{T}E[D_{T}^{l}F|\mathcal{F}_{T}]\delta W(s_{l})\ldots\delta
W(s_{1})~\mbox{in}~L^2(\Omega),
\end{equation}
with convention that the first term in the above series is $E[F|\mathcal F_T]$.
In order to establish (\ref{aga2}), we rely on Lemma \ref{iSI} below.
\begin{lemma}
\label{iSI}
Let $F$ be defined as in Theorem \ref{BTE} and $m\in\{0,\ldots,M-1\}$, $t=m\Delta$, $T=(m+1)\Delta$. Then for any integer $l\ge0$,
\begin{equation}
\label{ite}
\int_{t\leq s_{1}\leq ...\leq s_{l}\leq T}F\,(\delta
W(s))^{\otimes l}=\sum_{i=0}^{l}D_{T}^{i}F\frac{(-1)^{i}(T-t)^{\frac{l+i}{2}%
}}{i!(l-i)!}h_{l-i}\Big(\frac{W(T)-W(t)}{\sqrt{T-t}}%
\Big),
\end{equation}
where we recall that $h_{l-i}$ denotes the Hermite polynomial of degree $l-i$ (see (\ref{hermite})).
\end{lemma}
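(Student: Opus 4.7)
The plan is to rewrite the iterated Skorohod integral over the simplex as an $l$-fold Skorohod integral over the cube, derive a two-step recursion for it, and solve the recursion inductively. Since $F$ does not depend on the integration variables, we have $I_l(F):=\int_{t\le s_1\le\ldots\le s_l\le T} F\,(\delta W(s))^{\otimes l}=A_l(F)/l!$, where $A_l(F):=\delta^l(F\cdot\chi_{(t,T]^l})=\int_{[t,T]^l} F\,(\delta W(s))^{\otimes l}$ in the paper's notation. Peeling off one Skorohod integral gives $A_l(F)=\int_t^T A_{l-1}(F)\,\delta W(s)$, and applying the Skorohod product rule $\int_t^T X\,\delta W(s)=X(W(T)-W(t))-\int_t^T D_s X\,ds$ requires evaluating $D_s A_{l-1}(F)$. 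The standard commutation identity $D_s\delta^{l-1}(g)=(l-1)\delta^{l-2}(g(s,\cdot))+\delta^{l-1}(D_s g)$ for symmetric random kernels, together with the cylindrical property $D_s F=D_T F$ and $\chi_{(t,T]}(s)=1$ for $s\in(t,T]$, yields $D_s A_{l-1}(F)=A_{l-1}(D_T F)+(l-1)A_{l-2}(F)$, hence the recursion
\[
A_l(F)=A_{l-1}(F)(W(T)-W(t))-(T-t)\,A_{l-1}(D_T F)-(l-1)(T-t)\,A_{l-2}(F).
\]

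I then claim by strong induction on $l$ that $A_l(F)=\sum_{m=0}^l(-1)^m\binom{l}{m}(T-t)^m D_T^m F\cdot Y(t,l-m)$, where $Y(t,k):=(T-t)^{k/2}h_k\big((W(T)-W(t))/\sqrt{T-t}\big)$. The base cases $l=0,1$ follow directly from the definitions and the product rule. For the inductive step I substitute the hypothesis for $A_{l-1}(F)$, $A_{l-1}(D_T F)$, and $A_{l-2}(F)$ into the recursion, and use the Hermite recurrence $Y(t,1)\cdot Y(t,k)=Y(t,k+1)+k(T-t)Y(t,k-1)$ (coming from $xh_k(x)=h_{k+1}(x)+kh_{k-1}(x)$) to expand $A_{l-1}(F)\cdot(W(T)-W(t))$. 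Collecting the coefficient of $D_T^m F\cdot Y(t,l-m)$ gives $(-1)^m\big[\binom{l-1}{m}+\binom{l-1}{m-1}\big](T-t)^m=(-1)^m\binom{l}{m}(T-t)^m$ by Pascal's identity, whereas the residual cross-terms involving $Y(t,l-2-j)$, which come from the $k(T-t)Y(t,k-1)$ branch of the Hermite recurrence and from the $-(l-1)(T-t)A_{l-2}(F)$ contribution, cancel exactly thanks to the identity $(l-1-j)\binom{l-1}{j}=(l-1)\binom{l-2}{j}$.

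Dividing by $l!$, using $\binom{l}{m}/l!=1/[m!(l-m)!]$, and observing that $(T-t)^m Y(t,l-m)=(T-t)^{(l+m)/2}h_{l-m}\big((W(T)-W(t))/\sqrt{T-t}\big)$, recovers formula (\ref{ite}) exactly. The main obstacle I anticipate is justifying the commutation identity $D_s\delta^{l-1}(g)=(l-1)\delta^{l-2}(g(s,\cdot))+\delta^{l-1}(D_s g)$ for random symmetric kernels with the appropriate integrability. This is a higher-order extension of the classical identity $D_s\delta(u)=u(s)+\delta(D_s u)$ and is standard, but it must be applied to kernels of the form $F\cdot\chi_{(t,T]^{l-1}}$ where $F$ and all its iterated Malliavin derivatives $D_T^k F$ have to lie in the right spaces; this holds because the cylindrical assumption forces $D_T^k F$ to be cylindrical for every $k$, placing it in $\mathbb{D}_\infty$. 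The subsequent combinatorial bookkeeping is routine.
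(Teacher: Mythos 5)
Your argument is correct, and it reaches the identity by a genuinely different route than the paper. The paper also inducts on $l$, also uses only the Skorohod product rule $\int_t^T Fu(s)\,\delta W(s)=F\int_t^Tu(s)\,\delta W(s)-\int_t^TD_sF\,u(s)\,\mathrm{d}s$ and the cylindrical fact $D_sF=D_TF$ on $(t,T]$, but it stays with the iterated simplex integrals $a_l(\cdot)$, keeps the lower integration limit variable (treating $a_l(s_1)$ as a process in the outer variable), and re-expands it through the identity $\frac{(T-t)^{k/2}}{k!}h_k\big(\frac{W(T)-W(t)}{\sqrt{T-t}}\big)=\int_{t\le s_1\le\ldots\le s_k\le T}(\mathrm{d}W(s))^{\otimes k}$, so that each term $D_T^iF\,(T-s_1)^i\times(\text{iterated Wiener integral})$ can be integrated once more via the product rule and a Fubini step; no commutation formula for multiple Skorohod integrals is ever invoked. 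You instead pass to the symmetrized cube integrals $A_l=\delta^l\big(F\chi_{(t,T]^l}\big)$, obtain the three-term recursion from $D_s\delta^{q}(u)=q\,\delta^{q-1}(u(\cdot,s))+\delta^{q}(D_su)$, and close the induction with the Hermite recurrence $xh_k=h_{k+1}+kh_{k-1}$ and Pascal's identity (your coefficient checks, including the cancellation $(l-1-j)\binom{l-1}{j}=(l-1)\binom{l-2}{j}$, are right). What each buys: your version makes the Hermite structure purely algebraic and avoids manipulating integrals with moving lower limits and the attendant bookkeeping with the weights $(T-s_1)^i$; the paper's version needs only the first-order product rule and the Hermite/iterated-Wiener-integral identity, so it never has to justify the identification of the iterated simplex integral with $\frac{1}{l!}\delta^l$ of the symmetrization, the factorization $\delta^{l}=\delta\circ\delta^{l-1}$, or the derivative--$\delta^q$ commutation relation. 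Those three facts are the only places where your write-up should add citations (they are in Nourdin--Nualart, the paper's own reference for multiple Skorohod integrals, and hold here because $F$ and all $D_T^kF$ are smooth cylindrical), but this is a matter of referencing, not a gap.
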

\begin{proof}
For simplifying notation we define, for any integer $l\geq 1$,
$$
a_{l}(t):=\int_{t\leq s_{1}\leq ...\leq s_{l}\leq T}F\,(\,\mathrm{\delta }
W(s))^{\otimes l}.
$$
Our strategy of proving lemma \ref{iSI} is by induction. We first recall the Skorohod integral of a process multiplied by a random variable formula (see e.g. (1.49) in \cite{r8}): for a random variable $F\in \mathbb D_{\infty}([0,T])$ and a process $u$ such that $E[F^2\int_0^Tu(t)^2\ud t]$, we have for $0\le t\le T$,
\begin{equation}
\label{Skorohod}
\int_{t}^{T}Fu(s)\,\mathrm{\delta } W(s)=F\int_{t}^{T}u(s)\delta W(s)-\int_{t}^{T}D_{s}Fu(s)\mathrm{d}s.
\end{equation}
 When $k=1$, by (\ref{Skorohod}), the fact that $D_sF=D_TF$ for $s\in(t,T]$ and the definition of Hermite polynomials, we show (\ref{ite}) holds:
\begin{eqnarray*}
a_{1}(t) &=&\int_{t}^{T}F\,\mathrm{\delta}W(s) \\
&=&F\int_{t}^{T}\mathrm{d}W(s)-\int_{t}^{T}D_{s}F\mathrm{d}%
s \\
&=&\sum\limits_{i=0}^{1}D_{T}^{i}F\frac{( -1) ^{i}\left( T-t\right) ^{\frac{%
 1+i }{2}}}{i!( 1-i) !}h_{1-i}\Big( \frac{W(T)-W(t)}{%
\sqrt{ T-t}}\Big).
\end{eqnarray*}%
Now assume that (\ref{ite}) holds for $a_{l}(t)$ with some integer $l\ge1$. Recall that (see e.g. \cite{r10})
\begin{equation}
\label{Hermite1}
\frac{(T-t)^{\frac{l-i}{2}}}{(l-i)!}h_{l-i}\Big( \frac{W(T)-W(t)}{\sqrt{ T-t}}\Big)=\int_{t\leq s_{1}\leq s_{2}\leq
...\leq s_{l-i}\leq T}(\ud W(s))^{\otimes (l-i)}.
\end{equation} Therefore we write
\begin{eqnarray}
\label{inductionhyp}
a_{l}(t) &=&\sum\limits_{i=0}^{l}D_{T}^{i}F\frac{( -1) ^{i}\left(
T-t\right) ^{\frac{l+i }{2}}}{i!\left( l-i\right) !}%
h_{l-i}\Big( \frac{W(T)-W(t)}{\sqrt{T-t}}\Big)
\nonumber  \\
&=&\sum\limits_{i=0}^{l}\frac{(-1)^i(T-t)^i}{i!}D_{T}^{i}F\int_{t\leq s_{1}\leq s_{2}\leq
...\leq s_{l-i}\leq T}(\ud W(s))^{\otimes (l-i)}.
\end{eqnarray}%
(\ref{inductionhyp}) then yields
\begin{eqnarray}
\label{ak1}
a_{l+1}(t)&=&\int_{t}^{T}\Big( \int_{s_{1}\leq s_{2}\leq ...\leq s_{l+1}\leq
T}F\delta W(s_{l+1})\ldots\delta W(s_2))\Big) \,\mathrm{\delta }W(s_{1})\nonumber\\
&=&\int_{t}^{T}a_l(s_1) \,\mathrm{\delta }W(s_{1})\nonumber\\
&=&\sum\limits_{i=0}^{l}\frac{(-1)^i}{i!}\int_t^T\big(D_{T}^{i}F\big)(T-s_1)^i\Big(\int_{s_1\leq s_{2}\leq
...\leq s_{l+1-i}\leq T}\!\!\!\!\!\!\ud W(s_{l+1-i})\ldots\ud W(s_2)\Big)\delta W(s_1).\nonumber\\
\end{eqnarray}
For each $i=0,\ldots,l$, using again (\ref{Skorohod}), the fact that $D_sD_T^iF=D_T^{i+1}F$ for $s\in(t,T]$ and Fubini's theorem, we obtain
\begin{eqnarray}
\label{Skorohod1}
&&\int_t^T\big(D_{T}^{i}F\big)(T-s_1)^i\Big(\int_{s_1\leq s_{2}\leq
...\leq s_{l+1-i}\leq T}\ud W(s_{l+1-i})\ldots\ud W(s_2)\Big)\delta W(s_1)\nonumber\\
&&=\big(D_{T}^{i}F\big)\int_{t\leq s_{1}\leq
...\leq s_{l+1-i}\leq T}(T-s_1)^i(\ud W(s))^{\otimes (l+1-i)}\nonumber\\
&&~~-\big(D_{T}^{i+1}F\big)\int_{t\le s_1\le\ldots\le s_{l+1-i}\le T}(T-s_1)^i\ud W(s_{l+1-i})\ldots\ud W(s_2)\ud s_1\nonumber\\
&&=\big(D_{T}^{i}F\big)\int_{t\leq s_{1}\leq
...\leq s_{l+1-i}\leq T}(T-s_1)^i(\ud W(s))^{\otimes (l+1-i)}\nonumber\\
&&~~+\big(D_{T}^{i+1}F\big)\int_{t\le s_1\le\ldots\le s_{l-i}\le T}\frac{(T-s_1)^{i+1}}{i+1}(\ud W(s))^{\otimes (l-i)}\nonumber\\
&&~~-\big(D_{T}^{i+1}F\big)\int_{t\le s_1\le\ldots\le s_{l-i}\le T}\frac{(T-t)^{i+1}}{i+1}(\ud W(s))^{\otimes (l-i)}.
\end{eqnarray}
It follows from (\ref{ak1}), (\ref{Skorohod1}) and (\ref{Hermite1}) that (\ref{ite}) holds for $l+1$, hence it holds for all integer $l\ge1$.  Lemma \ref{iSI}
is proved.
\end{proof}
Then by applying Lemma \ref{iSI} to each term in (\ref{DBTE2}), we obtain:
\begin{eqnarray}
\label{jiandan}
&&E[F|\mathcal{F}_t]=\sum_{l=0}^{\infty}\sum_{i=0}^l\frac{(-1)^{i+l}(T-t)^{\frac{l+i}{2}%
}}{i!(l-i)!}h_{l-i}\Big(\frac{W(T)-W(t)}{\sqrt{T-t}}
\Big)E[D_{T}^{i+l}F|\mathcal{F}_T]
\end{eqnarray}
and (\ref{aga2}) follows by making the change of variable $l=k-i$ in (\ref{jiandan}). Theorem \ref{BTE} is proved. $\square$
\subsection{Proof of Theorem \ref{TEE}}
Given $F\in L^2(\Omega)$ is $\mathcal F_T$-measurable. Let $\Delta=T/M$ and suppose $F$ is generated on $(W(\Delta),\ldots,W(M\Delta))$. Fix $s\le T$.  Now  we are going to prove the following: for $t\le s$,
\begin{equation}
\frac{E[F|\mathcal{F}_{s-\delta }](\omega ^{t})-E[F|\mathcal{F}_{s}](\omega
^{t})}{\delta }\xrightarrow[\delta\rightarrow 0]{L^2(\Omega)}\frac{1}{2}%
\left( D_{s}^{2}E[F|\mathcal{F}_{s}]\right) (\omega ^{t}).  \label{result}
\end{equation}%
\begin{proof}
Denote by $\{\delta
_{k}=\Delta /k\}_{k\ge1}$. Let $m=\lfloor
s / \Delta \rfloor $, thus $s\in \lbrack m\Delta
,(m+1)\Delta ]$. First, suppose that $s\in (m\Delta ,(m+1)\Delta ]$. Then there exists $K>0$ such that for all $k\geq K$, $s-\delta_k \in (m\Delta, (m+1)\Delta)$.  Similar to (\ref{DBTE1}), we compute $E[F|\mathcal{F}_{s-\delta_k}]$ as
\begin{eqnarray}
\label{series}
&&E[F|\mathcal{F}_{s-\delta _{k}}]=E[F|\mathcal{F}_{s}]-\int_{s-\delta _{k}}^{s}E[D_{s}F|%
\mathcal{F}_{s}]\delta W(s_{1})\nonumber\\
&&~~+\int_{s-\delta
_{k}}^{s}\int_{s_{1}}^{s}E[D_{s}^{2}F|\mathcal{F}_{s}]\delta
W(s_{2})\delta W(s_{1})-R_{[s-\delta _{k},s]}^{3},
\end{eqnarray}
where
\begin{equation*}
R_{[s-\delta _{k},s]}^{3}=\int_{s-\delta
_{k}}^{s}\int_{s_{1}}^{s}\int_{s_{2}}^{s}E[D_{s}^{3}F|\mathcal{F}%
_{s_{3}}]\delta W(s_{3})\delta W(s_{2})\delta W(s_{1}).
\end{equation*}
On one hand, by Lemma \ref{Remainder},
\begin{equation}
E\left[( R_{[s-\delta _{k},s]}^{3}) ^{2}\right]\leq
\sum_{i=0}^{3}E\left[ ( D_{s}^{6-i}F) ^{2}\right] {\binom{3}{i}}%
^{4}\frac{i!}{\left( 3!\right) ^{2}}\delta _{k}^{6-i}.  \label{remainder}
\end{equation}
The above upper bound is finite, due to the fact that $F\in\mathbb D^6([0,T])$. On the other hand, when acted on freezing path operator, the terms in (\ref{series}) become
\begin{eqnarray}
&&\Big(-\int_{s-\delta _{k}}^{s}E[D_{s}F|%
\mathcal{F}_{s}]\delta W(s_{1})\Big) (\omega^t)=\delta_kE[D_s^2F|\mathcal{F}_s](\omega^t); \label{541}\\
&&\Big(\int_{s-\delta
_{k}}^{s}\int_{s_{1}}^{s}E[D_{s}^{2}F|\mathcal{F}_{s}]\delta
W(s_{2})\delta W(s_{1})\Big)(\omega^t)=\delta_k\Big(-\frac{1}{2}E[D_s^2F|\mathcal{F}_s]+\frac{\delta_k}{2}E[D_s^4F|\mathcal{F}_s]\Big)(\omega^t).\nonumber\\
\label{542}
\end{eqnarray}
Thus combining (\ref{series})-(\ref{542}) and the fact that $F\in \mathbb{D}^6([0,T])$, we get
\begin{equation*}
\frac{E[F|\mathcal{F}_{s-\delta _{k}}](\omega
^{t})-E[F|\mathcal{F}_{s}](\omega
^{t})}{\delta _{k}}-\frac{1}{2} D_{s}^{2}E[F|%
\mathcal{F}_{s}](\omega
^{t})]\xrightarrow[k\rightarrow \infty]{L^2(\Omega)}0.
\end{equation*}
Suppose now that $s=m\Delta $. Then similarly we can choose $K$ such that when $k>K$, $s-\delta_k\in ((m-1)\Delta,m\Delta)$ and then clearly (\ref{series})-(\ref%
{542}) also hold. Thus the proof is completed.
\end{proof}
\subsection{Proof of Theorem \ref{ExpFormula}}
For $F\in \mathbb D_\infty([0,T])$, there exists $G$ such that $F=G(W\chi_{[0,T]})$. We first construct a sequence $\{F^{(M)}=G_M(W\chi_{[0,T]})\}_{M\ge1}$ satisfying
 $$
 G_M(W\chi_{[0,t]})\xrightarrow[M\rightarrow\infty]{L^2(\Omega)}G(W\chi_{[0,t]}),~\mbox{for all}~t\in[0,T].
 $$
Since $\{W(t)\}_{t\ge0}$ is a continuous Gaussian process, it can be showed by the Stone-Weierstrass theorem and Wiener chaos decomposition that there exists a sequence of polynomial functions $\{p_M\}_{M \geq 1}$ such that
$$
p_M\Big(\int_0^th_1^{(M)}(s)\ud W(s),\ldots,\int_0^th_{n_M}^{(M)}(s)\ud W(s)\Big)\xrightarrow[M\rightarrow\infty]{L^2(\Omega)}G(W\chi_{[0,t]}),~\mbox{for all}~t\in[0,T],
$$
 where $n_M$ is some positive integer only depending on $M$ and $h_1^{(M)},\ldots,h_{n_M}^{(M)}\in L^2([0,T])$. Also observe that, each Wiener integral $\int_0^th_i^{(M)}(s)\ud W(s)$ has a "Riemann sum" approximation as
 $$
 J_M^{(i)}\Big(W\big(\frac{T}{M}\wedge t\big),W\big(\frac{2T}{M}\wedge t\big),\ldots,W(T\wedge t)\Big)\xrightarrow[M\rightarrow\infty]{L^2(\Omega)}\int_0^th_i^{(M)}(s)\ud W(s),~\mbox{for all}~t\in[0,T]
 $$
with $\{J_M^{(i)}\}_{M\geq 1}$ being polynomials. Therefore by the continuity of the Brownian paths and polynomials, we have for all $t\in[0,T]$,
 \begin{eqnarray*}
 G_M(W\chi_{[0,t]})&:=&\big(p_M\circ(J_M^{(1)},\ldots,J_M^{(n_M)})\big)\Big(<\chi_{[0,\frac{T}{M}]},W\chi_{[0,t]}>,\ldots,<\chi_{[0,T]},W\chi_{[0,t]}>\Big)\\
 &&\xrightarrow[M\rightarrow\infty]{L^2(\Omega)}G(W\chi_{[0,t]}).
 \end{eqnarray*}
 For $t\in[0,T]$, define $F^{(M)}=G_M(W\chi_{[0,T]})$. We remark that Theorem \ref{TEE} in fact holds for any cylindrical random variable $F=g(W(t_1),\ldots,W(t_n))$. Since $F^{(M)}$ is some polynomial of a discrete Brownian path, it belongs to $\mathbb D^6([0,T])$. Then from (%
\ref{result}), we obtain: for $s\ge t$,
\begin{equation}
P_{s}F^{(M)}(\omega ^{t})=F^{(M)}(\omega ^{t})+\frac{1}{2}\int_{s}^{T}(D_{u}^{2}\circ
P_{u})F^{(M)}(\omega ^{t})\,\mathrm{d}u.  \label{int}
\end{equation}%
For positive integer $n\ $we define the operator $%
T_{s}^{(n)}$ by
\begin{equation*}
T_{s}^{(n)}F^{(M)}:=\sum_{i=0}^{n}\mathcal{A}_{i,s}F^{(M)},
\end{equation*}%
where%
\begin{equation*}
\mathcal{A}_{i,s}F^{(M)}:=\frac{1}{2^{i}}\int_{s}^{T}\ldots \int_{s_{i-1}}^{T}%
D_{s_{1}}^{2}\ldots D_{s_{i}}^{2}F^{(M)}\,\mathrm{d}s_{i}\ldots \,\mathrm{d}s_{1}.
\end{equation*}%
Then by iterating (\ref{int}) we obtain: for $n\ge1$,
\begin{equation}
\label{it}
P_{s}F^{(M)}(\omega ^{t})=T_{s}^{(n-1)}F^{(M)}(\omega ^{t})+\frac{1}{2^{n}}%
\int_{s}^{T}\ldots \int_{u_{n-1}}^{T}(D_{u_{n}}^{2}\ldots D_{u_{1}}^{2}\circ
P_{u_{n}})F^{(M)}(\omega ^{t})\,\mathrm{d}u_{n}\ldots \,\mathrm{d}u_{1}.
\end{equation}
Remark that for a general $V$ of the form $V(W\chi_{[0,T]})=f(W(t_1),\ldots,W(t_n))$, we have
\begin{eqnarray*}
D_u(V(W\chi_{[0,T]}))&=&(\mathcal D_u\circ V)(W\chi_{[0,T]})\\
&:=&\sum_{i=1}^n\frac{\partial f}{\partial x_i}\Big(\int_0^{T}\chi_{[0,t_1]}(s)\ud W(s),\ldots,\int_0^{T}\chi_{[0,t_n]}(s)\ud W(s)\Big)\chi_{[0,t_i]}(u)
\end{eqnarray*}
is continuous with respect to $T$. We note here $\mathcal D_u$ is explicitly defined by: if
$$
V=f(<\chi_{[0,t_1]},\cdot>,\ldots,<\chi_{[0,t_n]},\cdot>),
$$
then
 $$
 \mathcal D_u\circ V=\sum_{i=1}^n\frac{\partial f}{\partial x_i}(<\chi_{[0,t_1]}, \cdot>,\ldots,<\chi_{[0,t_n]},\cdot>)\chi_{[0,t_i]}(u).
 $$
 Since $D_{u_{n}}^{2}\ldots
D_{u_{1}}^{2}G(W\chi_{[0,t]})\in L^2(\Omega)$ for all $u_1,\ldots,u_n\ge0$, then the closability of the Malliavin derivative operator (see Lemma 1.2.2 in \cite{r8}) implies the fact that for all $t\in[0,T]$,
 $$
 D_{u_{n}}^{2}\ldots
D_{u_{1}}^{2}(G_M(W\chi_{[0,t]}))\xrightarrow[M\rightarrow\infty]{L^2(\Omega)}D_{u_{n}}^{2}\ldots
D_{u_{1}}^{2}G(W\chi_{[0,t]}).
$$
Hence by remark (\ref{conv:omega}), we obtain
$$
(\mathcal D_{u_{n}}^{2}\circ\ldots
\circ\mathcal D_{u_{1}}^{2}\circ G_M(W\chi_{[0,T]}))(\omega^t)\xrightarrow[M\rightarrow\infty]{L^2(\Omega)}(\mathcal D_{u_{n}}^{2}\circ\ldots
\circ\mathcal D_{u_{1}}^{2}\circ G(W\chi_{[0,T]}))(\omega^t).
$$
Equivalently,
\begin{equation}
\label{preserve}
 (D_{u_{n}}^{2}\ldots
D_{u_{1}}^{2}F^{(M)})(\omega^t)\xrightarrow[M\rightarrow\infty]{L^2(\Omega)}(D_{u_{n}}^{2}\ldots
D_{u_{1}}^{2}F)(\omega^t).
\end{equation}
Thus according to assumption (\ref{assumptionb}) and (\ref{preserve}), we get
\begin{eqnarray}
\label{bounddiff}
&&\| (P_{s}-T_{s}^{(n-1)})F^{(M)}(\omega ^{t})\|_{L^2(\Omega)}\nonumber\\
&& =\Big\|\frac{1}{2^{n}}\int_{s}^{T}\ldots
\int_{u_{n-1}}^{T}(D_{u_{n}}^{2}\ldots D_{u_{1}}^{2}\circ P_{u_{n}})F^{(M)}(\omega
^{t})\,\mathrm{d}u_{n}\ldots \,\mathrm{d}u_{1}\Big\|_{L^2(\Omega)}\notag \\
&&\le\frac{(T-s)^{n}}{2^{n}n!}\Big(\Big\| \sup_{u_{1},\ldots
,u_{n}\in [ 0,T]}\left\vert (D_{u_{n}}^{2}\ldots
D_{u_{1}}^{2}F)(\omega^t)\right\vert\Big\|_{L^2(\Omega)}\nonumber\\
&&~~ +\Big\|\sup_{u_{1},\ldots
,u_{n}\in [ 0,T]}\left\vert (D_{u_{n}}^{2}\ldots
D_{u_{1}}^{2}F^{(M)})(\omega^t)-(D_{u_{n}}^{2}\ldots
D_{u_{1}}^{2}F)(\omega^t)\right\vert\Big\|_{L^2(\Omega)}\Big)\nonumber\\
&&~~ \xrightarrow[n\rightarrow
\infty]{}0.
\end{eqnarray}
We now take $s=t$ and let $n\rightarrow\infty$ in (\ref{it}). By (\ref{bounddiff}),
\begin{equation}
\label{FM}
E[F^{(M)}|\mathcal{F}_{t}]=P_{t}F^{(M)}=\sum_{n=0}^{\infty }\frac{1}{2^{n}}\int_{t}^{T}\ldots
\int_{s_{n-1}}^{T}(D_{s_{n}}^{2}\ldots D_{s_{1}}^{2}F^{(M)})(\omega ^{t})\,\mathrm{d}%
s_{n}\ldots \,\mathrm{d}s_{1}.
\end{equation}
Letting $M\rightarrow\infty$ in (\ref{FM}), then using (\ref{preserve}), dominated convergence theorem and assumption (\ref{assumptionb}), we obtain
$$
E[F|\mathcal{F}_{t}]=\sum_{n=0}^{\infty }\frac{1}{2^{n}}\int_{t}^{T}\ldots
\int_{s_{n-1}}^{T}(D_{s_{n}}^{2}\ldots D_{s_{1}}^{2}F)(\omega ^{t})\,\mathrm{d}%
s_{n}\ldots \,\mathrm{d}s_{1}.~~\square
$$
\section*{Acknowledgements}

We thank \emph{Josep Vives} and \emph{Hank Krieger} for reviewing our manuscript. All errors are ours.

\bigskip

\end{document}